\documentclass[11pt]{amsart}
\overfullrule = 0pt \topmargin=0in
\usepackage{amssymb,amsmath,amsthm,epsfig}

\usepackage{amssymb}
\theoremstyle{plain}

\topmargin=0in \oddsidemargin=0in \evensidemargin=0in
\textwidth=6in \textheight=8in \flushbottom


\newtheorem{thm}{Theorem}[section]
\newtheorem{lem}[thm]{Lemma}
\newtheorem{cor}[thm]{Corollary}
\newtheorem{prop}[thm]{Proposition}
\theoremstyle{definition}
\newtheorem{defin}[thm]{Definition}







\def\hangbox to #1 #2{\vskip1pt\hangindent #1\noindent \hbox to #1{#2}$\!\!$}


\allowdisplaybreaks

\title{Sampling and Recovery of Multidimensional Bandlimited Functions via Frames}

\author{Benjamin Bailey}
\address{Department of Mathematics, Texas A\&M University\\  
College Station, TX 77843, USA}
\email{abailey@math.tamu.edu}

\begin{document}
\maketitle

\begin{abstract}
In this paper, we investigate frames for $L_2[-\pi,\pi]^d$ consisting of exponential functions in connection to oversampling and nonuniform sampling of bandlimited functions.  We derive a multidimensional nonuniform oversampling formula for bandlimited functions with a fairly general frequency domain.  The stability of said formula under various perturbations in the sampled data is investigated, and a computationally managable simplification of the main oversampling theorem is given.  Also, a generalization of Kadec's $1/4$ Theorem  to higher dimensions is considered. Finally, the developed techniques are used to approximate biorthogonal functions of particular exponential Riesz bases for $L_2[-\pi,\pi]$, and a well known theorem of Levinson is recovered as a corollary.
\end{abstract}

\section{Introduction}\label{S:0}

\noindent The subject of recovery of bandlimited signals from discrete data has its origins in the Whittaker-Kotel'nikov-Shannon (WKS) sampling theorem (stated below), historically the first and simplest such recovery formula. Without loss of generality, the formula recovers a function with a frequency band of $[-\pi,\pi]$ given the function's values at the integers.  The WKS theorem has drawbacks.  Foremost, the recovery formula does not converge given certain types of error in the sampled data, as Daubechies and DeVore mention in \cite{DD}.  They use oversampling to derive an alternative recovery formula which does not have this defect.  Additionally for the WKS theorem, the data nodes have to be equally spaced, and nonuniform sampling nodes are not allowed.  As discussed in \cite[pages 41-42]{Z}, nonuniform sampling of bandlimited functions has its roots in the work of Paley, Wiener, and Levinson.  Their sampling formulae recover a function from nodes $(t_n)_n$, where $(e^{i t_n x})_n$ forms a Riesz basis for $L_2[-\pi,\pi]$.  More generally, frames have been applied to nonuniform sampling, particularly in the work of Benedetto and Heller in \cite{Ben} and \cite{BenH}; see also \cite[chapter 10]{Z}.\\ 

\noindent In Section 3, we derive a multidimensional oversampling formula, (see equation (4)), for nonuniform nodes and bandlimited functions with a fairly general frequency domain;  Section 4 investigates the stability of equation (4) under perturbation of the sampled data.  Section 5 presents a computationally feasible version of equation (4) in the case where the  nodes are asymptotically uniformly distributed.  Kadec's theorem gives a criterion for the nodes $(t_n)_n$ so that $(e^{i t_n x})_n$ forms a Riesz basis for $L_2[-\pi,\pi]$.  Generalizations of Kadec's 1/4 theorem to higher dimensions are considered in Section 6, and an asymptotic equivalence of two generalizations is given.  Section 7 investigates approximation of the biorthogonal functionals of Riesz bases.  Additionally, we give a simple proof of a theorem of Levinson.\\

\noindent This paper forms a portion of the author's doctoral thesis, which is being prepared at Texas A \& M University under the direction of Thomas Schlumprecht and N. Sivakumar.

\section{Preliminaries}\label{S:1}

\noindent We use the $d$-dimensional $L_2$ Fourier transform
\begin{equation}
\mathcal{F}(f)(\cdotp) = \int_{\mathbb{R}^d} f(\xi)e^{-i \langle \cdotp , \xi \rangle} d\xi, \quad f  \in L_2(\mathbb{R})^d,\nonumber
\end{equation}
where the inverse transform is given by 
\begin{equation}
\mathcal{F}^{-1}(f)(\cdotp) = \frac{1}{(2\pi)^d}\int_{\mathbb{R}^d} f(\xi)e^{i \langle \cdotp , \xi \rangle} d\xi , \quad f  \in L_2(\mathbb{R})^d.\nonumber
\end{equation}
\noindent This is an abuse of notation.  The integral is actually a principal value where the limit is in the $L_2$ sense.  This map is an onto isomorphism from $L_2 (\mathbb{R}^d)$ to itself.\\
\begin{defin}
 Given a bounded measurable set $E$ with positive measure, we define $PW_E := \{f \in L_2(\mathbb{R}^d) \arrowvert \mathrm{supp}(\mathcal{F}^{-1}(f)) \subset E \}$.  Functions in $PW_E$ are said to be bandlimited.
\end{defin}

\begin{defin} The function $\mathrm{sinc}:\mathbb{R} \rightarrow \mathbb{R}$ is defined by $\mathrm{sinc}(x) = \frac{\sin(x)}{x}.$  We also define the multidimensional sinc function $\mathrm{SINC}:\mathbb{R}^d \rightarrow \mathbb{R}^d$ by $\mathrm{SINC}(x) = \mathrm{sinc}(x_1)\cdot\ldots\cdot\mathrm{sinc}(x_d)$, $x=(x_1,\ldots, x_d)$.
\end{defin}

\noindent We recall some basic facts about $PW_E$:\\

\noindent 1) $PW_E$ is a Hilbert space consisting of entire functions, though in this paper we only regard the functions as having real arguments.\\

\noindent 2) \noindent In $PW_E$, $L_2$ convergence implies uniform convergence.  This is an easy consequence of the Cauchy-Schwarz inequality.\\

\noindent 3) The function $\mathrm{sinc}(\pi(x-y)))$ is a reproducing kernel for $PW_{[-\pi,\pi]}$, that is, if $f \in PW_{[-\pi,\pi]}$, then we have
\begin{equation}\label{repker}
f(t) = \int_{-\infty}^{\infty} f(\tau)\mathrm{sinc}(\pi(t-\tau)) d\tau, \quad t \in \mathbb{R}.
\end{equation}

\noindent 4) The WKS sampling theorem (see for example \cite[page 91]{Y}):  If $f \in PW_{[-\pi,\pi]}$, then
\begin{equation}
f(t) = \sum_{n \in \mathbb{Z}} f(n) \mathrm{sinc}(\pi(t-n)),\quad t \in \mathbb{R},\nonumber
\end{equation}
where the sum converges in $PW_{[-\pi,\pi]}$, and hence uniformly.\\

\noindent If $(f_n)_{n \in \mathbb{N}}$ is a Schauder basis for a Hilbert space $H$, then there exists a unique set of functions $(f_n^*)_{n \in \mathbb{N}}$, (the biorthogonals of $(f_n)_{n \in \mathbb{N}}$) such that $\langle f_n , f_m^* \rangle  = \delta_{nm}.$  The biorthogonals also form a Schauder basis for $H$. Note that biorthogonality is preserved under a unitary transformation.\\

\begin{defin}
A sequence $(f_n)_n \subset H$ such that the map $ L e_n = f_n$ is an onto isomorphism is called a Riesz basis for $H$.
\end{defin}

\noindent The following definitions and facts concerning frames are found in \cite[section 4]{CA}.
\begin{defin}
A frame for a separable Hilbert space $H$ is a sequence $(f_n)_n \subset H$ such that for some $0<A<B,$
\begin{equation}\label{framedef}
A\Arrowvert f \Arrowvert^2 \leq \sum_n  | \langle f , f_n \rangle |^2 \leq B\Arrowvert f \Arrowvert^2, \quad \forall f \in H.
\end{equation}
\end{defin}

\noindent The numbers $A$ and $B$ in the equation~(\ref{framedef}) are called the lower and upper frame bounds.\\

\noindent Let $H$ be a Hilbert space with orthonormal basis $(e_n)_n$.  The following conditions are equivalent to $(f_n)_n \subset H$ being a frame for $H$.\\

\noindent 1) The map $L:H \rightarrow H$ defined by $ L e_n = f_n$ is bounded linear and onto.  This map is called the preframe operator.\\
\noindent 2) The map $L^*:H \rightarrow H$ (the adjoint of the preframe operator) given by $f \mapsto \sum_n \langle f , f_n \rangle e_n$ is an isomorphic embedding.\\

\noindent Given a frame $(f_n)_n$ with preframe operator $L$, the map $S = L L^*$  given by $S f = \sum_n \langle f , f_n \rangle f_n$ is an onto isomorphism.  $S$ is called the frame operator associated to the frame.  It follows that $S$ is positive and self-adjoint.\\

\noindent The basic connection between frames and sampling theory of bandlimited functions (more generally in a reproducing kernel Hilbert space) is straightforward.  If $(e^{i t_n (\cdot)})_n$ is a frame for $f \in PW_{[-\pi,\pi]}$ with frame operator $S$, and $f \in PW_{[-\pi,\pi]}$, then
\begin{eqnarray}
S(\mathcal{F}^{-1}(f)) = \sum_n \langle \mathcal{F}^{-1}(f) , f_n \rangle f_n = \sum_n \mathcal{F}(\mathcal{F}^{-1}(f))(t_n) f_n = \sum_n f(t_n) f_n,\nonumber
\end{eqnarray}
implying that $\mathcal{F}^{-1}(f) = \sum_n f(t_n) S^{-1} f_n$, so that $f = \sum_n f(t_n) \mathcal{F}(S^{-1}f_n).$  Note that in the case when $t_n = n$, we recover the WKS theorem.

\begin{defin}
A sequence $(f_n)_n$ satisfying the second inequality in equation~(\ref{framedef}) is called a Bessel sequence.
\end{defin}

\begin{defin}
An exact frame is a frame which ceases to be one if any of its elements is removed.
\end{defin}

\noindent It can be shown that the notions of Riesz bases, exact frames, and unconditional Schauder bases coincide.\\

\begin{defin}
A subset $S$ of $\mathbb{R}^d$ is said to be uniformly separated if
\begin{equation}
\inf_{x,y \in S, x \neq y} \Arrowvert x-y \Arrowvert_2 >0.\nonumber
\end{equation}
\end{defin}

\begin{defin}\label{seqdef}
If $S=(x_k)_k$ is a sequence of real numbers and $f$ is a function with $S$ in its domain, then $f_{S}$ denotes the sequence $(f(x_k))_k$.
\end{defin}

\section{The multidimensional oversampling theorem}\label{S:2}

\noindent In \cite{DD}, Daubechies and DeVore derive the following formula:
\begin{equation}\label{DDD}
f(t) = \frac{1}{\lambda}\sum_{n\in \mathbb{Z}}f \big( \frac{n}{\lambda} \big)g \big( t-\frac{n}{\lambda} \big),\quad t \in \mathbb{R},
\end{equation}
where $g$ is infinitely smooth and decays rapidly.  Thus oversampling allows the representation of bandlimited functions as combinations of integer translates of $g$ rather than the sinc function.  In this sense equation~(\ref{DDD}) is a generalization of the WKS theorem.  The rapid decay of $g$ yields a certain stability in the recovery formula, given bounded perturbations in the sampled data \cite{DD}.\\

\noindent In this section we derive a multidimensional version of equation~(\ref{DDD}), (Theorem 3.1) for unequally spaced sample points, and the corresponding non-oversampling version of the WKS theorem is given in Theorem 3.2.\\

\noindent Daubechies and DeVore regard $\mathcal{F}^{-1}(f)$ as an element of $L_2[-\lambda\pi,\lambda\pi]$ for some $\lambda>1$. In their proof the obvious fact that $[-\pi,\pi] \subset [-\lambda\pi,\lambda\pi]$ allows for the construction of the bump function $\mathcal{F}^{-1}(g) \in C^{\infty}(\mathbb{R})$ which is $1$ on $[-\pi,\pi]$ and $0$ off $[-\lambda\pi,\lambda\pi]$.  If their result is to be generalized to a sampling theorem for $PW_E$ in higher dimensions, a suitable condition for $E$ allowing the existence of a bump function is necessary.  If $E \subset \mathbb{R}^d$ is chosen to be compact such that for all $\lambda>1$, $E \subset \mathrm{int}(\lambda E)$, then Lemma 8.18 in \cite[page 245]{F}, a $C^\infty$-version of the Urysohn lemma, implies the existence of a smooth bump function which is $1$ on $E$ and $0$ off $\lambda E$.  It is to such regions that we generalize equation~(\ref{DDD}):\\

\begin{thm}\label{main}
Let $0 \in E \subset \mathbb{R}^d$ be compact such that for all $\lambda > 1$, $ E \subset \mathrm{int}(\lambda E).$  Choose $S=(t_n)_{n \in \mathbb{N}} \subset \mathbb{R}^d$ such that $(f_n)_{n \in \mathbb{N}}$, defined by $f_n(\cdotp) = e^{i \langle \cdotp , t_n \rangle}$, is a frame for $L_2(E)$ with frame operator $S$. Let $\lambda_0 >1$ with $\mathcal{F}^{-1}(g):\mathbb{R}^d \rightarrow \mathbb{R}$, $ \mathcal{F}^{-1}(g) \in C^\infty$ where $\mathcal{F}^{-1}(g)\arrowvert_{E} = 1$ and $\mathcal{F}^{-1}(g)\arrowvert_{(\lambda_0 E)^c}$=0.  If $\lambda \geq \lambda_0$ and $f \in PW_E$, then
\begin{equation}\label{biggie}
f(t) = \frac{1}{\lambda^d}\sum_{k \in \mathbb{N}} \Big( \sum_{n \in \mathbb{N}} B_{kn} f\big( \frac{t_n}{\lambda} \big) \Big) g\big(t-\frac{t_k}{\lambda}\big), \quad t \in \mathbb{R}^d,
\end{equation}
where $B_{kn} = \langle S^{-1}f_n , S^{-1}f_k \rangle_E$.  Convergence of the sum is in $L_2(\mathbb{R}^d)$, hence also uniform.  Further, the map $B:\ell_2(\mathbb{N}) \rightarrow \ell_2(\mathbb{N})$ defined by $(y_k)_{k \in \mathbb{N}} \mapsto \big( \sum_{n \in \mathbb{N}} B_{kn} y_n \big)_{k \in \mathbb{N}}$ is bounded linear, and is an onto isomorphism iff $(f_n)_{n \in \mathbb{N}}$ is a Riesz basis for $L_2(E)$.

\end{thm}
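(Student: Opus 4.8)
The plan is to work on the Fourier-transform side, where everything becomes a statement about the frame $(f_n)_n$ in $L_2(E)$.  Write $F = \mathcal{F}^{-1}(f) \in L_2(E)$ (extended by zero to $\mathbb{R}^d$).  First I would record the basic reconstruction identity already derived in the preliminaries in the rescaled form: since $(f_n)$ is a frame for $L_2(E)$ with frame operator $S$, for any $G \in L_2(E)$ we have $G = \sum_n \langle G, S^{-1} f_n\rangle_E\, f_n = \sum_n \langle G, f_n\rangle_E\, S^{-1} f_n$, and $\langle F, f_n\rangle_E = \int_E F(\xi) e^{-i\langle t_n,\xi\rangle}\,d\xi$.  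The point of oversampling is that $F$, viewed in $L_2(\lambda E) \supset L_2(E)$, can be replaced by $\mathcal{F}^{-1}(g)\cdot F$, which agrees with $F$ on $E$ but is smooth and compactly supported; one then applies the frame expansion in the \emph{larger} space $L_2(\lambda E)$ using the rescaled exponentials $e^{i\langle \cdot, t_n/\lambda\rangle}$ (which form a frame for $L_2(\lambda E)$ with frame operator unitarily equivalent to $S$, via the dilation $\xi \mapsto \lambda\xi$, and frame bounds and biorthogonal-type coefficients $B_{kn}$ scaled by $\lambda^d$).  The sampled values appear because $\langle \mathcal{F}^{-1}(g) F, e^{i\langle\cdot, t_n/\lambda\rangle}\rangle_{\lambda E} = \mathcal{F}(\mathcal{F}^{-1}(g)F)(t_n/\lambda)$, and since $\mathcal{F}^{-1}(g)=1$ on $E \supseteq \supp F$ this equals $(\mathcal{F}^{-1})^{-1}(F)(t_n/\lambda) = f(t_n/\lambda)$ — wait, more carefully it equals $\mathcal{F}(F)(t_n/\lambda)$, and $\mathcal{F}(F) = \mathcal{F}(\mathcal{F}^{-1}(f)) = f$.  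Taking $\mathcal{F}$ of the resulting expansion of $\mathcal{F}^{-1}(g)F$ and using $\mathcal{F}(e^{i\langle\cdot,t_k/\lambda\rangle}\cdot(\text{bump}))$-type identities to produce translates $g(t - t_k/\lambda)$ yields equation~(\ref{biggie}); the factor $\lambda^{-d}$ is the Jacobian bookkeeping from the two dilations.

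Second, I would identify the coefficients.  Expanding $\mathcal{F}^{-1}(g)F$ in the rescaled frame and regrouping, the coefficient multiplying $g(t - t_k/\lambda)$ is $\lambda^{-d}\sum_n \langle F, e^{i\langle\cdot,t_n/\lambda\rangle}\rangle_{\lambda E}\,\langle S_\lambda^{-1} e^{i\langle\cdot,t_n/\lambda\rangle}, S_\lambda^{-1} e^{i\langle\cdot,t_k/\lambda\rangle}\rangle_{\lambda E}$ (up to constants I would pin down), where $S_\lambda$ is the frame operator in $L_2(\lambda E)$.  Under the unitary dilation $U\colon L_2(\lambda E)\to L_2(E)$, $(Uh)(\xi)=\lambda^{d/2}h(\lambda\xi)$, one checks $U e^{i\langle\cdot,t_n/\lambda\rangle} = \lambda^{d/2} f_n$, hence $S_\lambda = \lambda^{-d} U^{-1} S U$ up to the right power, and consequently $\langle S_\lambda^{-1} e^{i\langle\cdot,t_n/\lambda\rangle}, S_\lambda^{-1} e^{i\langle\cdot,t_k/\lambda\rangle}\rangle_{\lambda E}$ is a fixed multiple of $\langle S^{-1}f_n, S^{-1}f_k\rangle_E = B_{kn}$, absorbing the remaining scalars into the stated $\lambda^{-d}$.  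The convergence in $L_2(\mathbb{R}^d)$ is automatic: the partial sums of the frame expansion of $\mathcal{F}^{-1}(g)F$ converge in $L_2(\lambda E)$, and $\mathcal{F}$ is an isomorphism, so the sum over $k$ converges in $L_2(\mathbb{R}^d)$; uniform convergence then follows from fact~(2) since all partial sums lie in $PW_{\lambda_0 E}$.

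Third, for the operator $B$: boundedness of $B$ on $\ell_2$ follows because $B = (L^* S^{-1})(L^* S^{-1})^*$ in the appropriate sense, or more directly since $B_{kn} = \langle S^{-1}f_n, S^{-1}f_k\rangle$ is the Gram matrix of the sequence $(S^{-1}f_k)_k = (\tilde f_k)_k$, which is the canonical dual frame of $(f_k)_k$ and hence itself a frame (with bounds $1/B, 1/A$); the Gram matrix of a Bessel sequence is a bounded operator on $\ell_2$.  For the isomorphism claim: the Gram operator $B$ of a frame $(\tilde f_k)_k$ is an onto isomorphism on $\ell_2$ if and only if $(\tilde f_k)_k$ is a Riesz basis (this is a standard equivalence — the Gram matrix is boundedly invertible exactly when the synthesis map is bounded below, i.e.\ the frame is a Riesz basis), and $(\tilde f_k)_k$ is a Riesz basis iff its dual $(f_k)_k$ is, which is exactly the stated condition.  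I expect the main obstacle to be bookkeeping: getting every power of $\lambda$ and $2\pi$ correct through the two dilations and the Fourier transform normalization, and justifying the interchange of $\mathcal{F}$ with the infinite sum (handled by $L_2$-continuity of $\mathcal{F}$ rather than termwise estimates).  The conceptual content — oversample, use the bump to sit inside a bigger Paley–Wiener space, expand in the dilated frame, transform back — is routine once the scaling is set up.
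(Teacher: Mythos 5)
Your outline follows the paper's own proof almost step for step: expand $\mathcal{F}^{-1}(f)$ in the dilated frame $(f_{\lambda,n})_n=(e^{i\langle\cdot,\,t_n/\lambda\rangle})_n$ for $L_2(\lambda E)$, recognize the analysis coefficients as the samples $f(t_n/\lambda)$, use the bump $\mathcal{F}^{-1}(g)$ to pass from $\lambda E$ to $\mathbb{R}^d$, take $\mathcal{F}$ to produce the translates $g(t-t_k/\lambda)$, and relate the dilated Gram data to $B$. Your unitary-dilation computation of $\langle S_\lambda^{-1}f_{\lambda,n},S_\lambda^{-1}f_{\lambda,k}\rangle_{\lambda E}=\lambda^{-d}B_{kn}$ is a cleaner route to the paper's Step 3 (the correct intertwining is $S_\lambda=\lambda^{d}U^{-1}SU$, not $\lambda^{-d}U^{-1}SU$, but the conclusion is the same), and your treatment of $B$ --- Gram matrix of the dual frame $(S^{-1}f_k)_k$, hence bounded on $\ell_2$; invertible iff that frame is a Riesz basis iff $(f_k)_k$ is --- is equivalent to the paper's factorization $B=\phi^{-1}L^*(S^{-1})^2L\phi$.

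The one genuine gap is the step you dispose of with the words ``regrouping'' and ``automatic.'' What your argument actually produces is the $n$-indexed series $f=\sum_n f(t_n/\lambda)\,\mathcal{F}\big[(S_\lambda^{-1}f_{\lambda,n})\mathcal{F}^{-1}(g)\big]$, whereas equation~(\ref{biggie}) is the $k$-indexed series with coefficients $(Bf_{S/\lambda})_k=\sum_n B_{kn}f(t_n/\lambda)$; the partial sums of the latter are not partial sums of the expansion you wrote down, so ``the partial sums of the frame expansion converge in $L_2(\lambda E)$, hence the sum over $k$ converges in $L_2(\mathbb{R}^d)$'' does not follow as stated. This is precisely what the paper's Steps 5 and 6 supply: the Bessel bound of the dilated frame gives $f_{S/\lambda}\in\ell_2$ and, for each fixed $t$, $(g(t-t_k/\lambda))_k\in\ell_2$; self-adjointness of $B$ then legitimizes the pointwise reordering $\sum_n f(t_n/\lambda)\sum_k B_{kn}g(t-t_k/\lambda)=\sum_k(Bf_{S/\lambda})_k\,g(t-t_k/\lambda)$; and a tail estimate through the synthesis (preframe) operator of $(f_{\lambda,k})_k$ yields the $L_2(\mathbb{R}^d)$ convergence. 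Alternatively, your shortcut can be repaired by noting that continuity of the inner product applied to your expansion gives $\lambda^{-d}(Bf_{S/\lambda})_k=\langle\mathcal{F}^{-1}(f),S_\lambda^{-1}f_{\lambda,k}\rangle_{\lambda E}$, so that the $k$-sum in equation~(\ref{biggie}) is (after multiplying by the bump and applying $\mathcal{F}$) the canonical expansion $\mathcal{F}^{-1}(f)=\sum_k\langle\mathcal{F}^{-1}(f),S_\lambda^{-1}f_{\lambda,k}\rangle f_{\lambda,k}$, whose $L_2(\lambda E)$ convergence is standard frame theory; but some such argument must be made explicit, since it is the only place in the proof where the convergence asserted in the theorem is actually established.
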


\begin{proof}
\noindent Define $f_{\lambda,n} (\cdotp) = f_n \big( \frac{\cdotp}{\lambda} \big)$.  Note that $(f_{\lambda,n})_n$ is a frame for $L_2(\lambda E)$ with frame operator $S_\lambda.$\\

\noindent Step 1: We show that\\
\begin{equation}\label{first}
f = \sum_{n} f \Big( \frac{t_n}{\lambda} \Big) \mathcal{F}[(S^{-1}_\lambda f_{\lambda, n})\mathcal{F}^{-1}(g)], \quad f \in PW_E.
\end{equation}
\noindent We know $\mathrm{supp}(\mathcal{F}^{-1}(f)) \subset E \subset \lambda E$, so we may work with $\mathcal{F}^{-1}(f)$ via its frame decomposition. We have
\begin{equation}
\mathcal{F}^{-1}(f) = S_\lambda^{-1} S_\lambda (\mathcal{F}^{-1} (f)) = \sum_n \langle \mathcal{F}^{-1}(f) , f_{\lambda,n} \rangle_{\lambda E} S^{-1}_{\lambda} f_{\lambda,n}, \quad \mathrm{on} \quad \lambda E\nonumber.
\end{equation}
This yields
\begin{equation}
\mathcal{F}^{-1}(f) = \sum_n \langle \mathcal{F}^{-1}(f) , f_{\lambda,n} \rangle_{\lambda E} (S^{-1}_{\lambda} f_{\lambda,n})\mathcal{F}^{-1}(g), \quad \mathrm{on} \quad \mathbb{R}^d,\nonumber
\end{equation}
\noindent since $\mathrm{supp}\mathcal{F}(g)\subset \lambda E$.  Taking Fourier transforms we obtain
\begin{equation}\label{second}
 f = \sum_n \langle \mathcal{F}^{-1}(f) , f_{\lambda,n} \rangle_{\lambda E} \mathcal{F}[(S^{-1}_{\lambda} f_{\lambda,n})\mathcal{F}^{-1}(g)], \quad \mathrm{on} \quad \mathbb{R}^d.
\end{equation}
Now 
\begin{eqnarray}
\langle \mathcal{F}^{-1}(f) , f_{\lambda,n} \rangle_{\lambda E}  =  \int_{\lambda E} \mathcal{F}^{-1}(f)(\xi) e^{-i \langle \xi , \frac{t_n}{\lambda} \rangle} d\xi = f \Big( \frac{t_n}{\lambda} \Big)\nonumber
\end{eqnarray}
which, when substituted into equation~(\ref{second}), yields (\ref{first}).\\

\noindent Step 2: We show that\\
\begin{equation}\label{third}
f(\cdotp) = \sum_n f \Big( \frac{t_n}{\lambda} \Big) \Big[ \sum_k \langle S^{-1}_\lambda f_{\lambda,n} , S^{-1}_\lambda f_{\lambda,k} \rangle_{\lambda E} g\big(\cdotp-\frac{t_k}{\lambda}\big) \Big],
\end{equation}
where convergence is in $L_2$.\\

\noindent We compute $\mathcal{F}[(S^{-1}_\lambda f_{\lambda,n})\mathcal{F}^{-1}(g)]$.  For $h \in L_2 (\lambda E)$ we have
\begin{eqnarray}
h = S_\lambda (S_\lambda^{-1} h) = \sum_k \langle S_\lambda^{-1} h , f_{\lambda,k} \rangle_{\lambda E} f_{\lambda,k} = \sum_k \langle  h , S_\lambda^{-1} f_{\lambda,k} \rangle_{\lambda E} f_{\lambda,k}.\nonumber
\end{eqnarray}
Letting $h = S^{-1}_\lambda f_{\lambda,n}$,
\begin{equation}
S^{-1}_\lambda f_{\lambda,n} = \sum_k \langle S^{-1}_\lambda f_{\lambda,n} , S^{-1}_\lambda f_{\lambda,k} \rangle_{\lambda E} f_{\lambda,k}.\nonumber
\end{equation}
This gives
\begin{eqnarray}
\mathcal{F}[(S^{-1}_\lambda f_{\lambda,n})F^{-1}(g)](\cdotp) & = & \sum_k \langle S^{-1}_\lambda f_{\lambda,n} , S^{-1}_\lambda f_{\lambda,k} \rangle_{\lambda E} \mathcal{F}[f_{\lambda,k} \mathcal{F}^{-1}(g)](\cdotp)\nonumber\\
& = & \sum_k \langle S^{-1}_\lambda f_{\lambda,n} , S^{-1}_\lambda f_{\lambda,k} \rangle_{\lambda E} \int_{\lambda E} e^{i \langle \xi , \frac{t_k}{\lambda} \rangle}\mathcal{F}^{-1}(g)(\xi) e^{-i \langle \xi , \cdotp \rangle}d \xi \nonumber \\
& = & \sum_k \langle S^{-1}_\lambda f_{\lambda,n} , S^{-1}_\lambda f_{\lambda,k} \rangle_{\lambda E} \int_{\lambda E} \mathcal{F}^{-1}(g)(\xi) e^{-i\langle \cdotp-\frac{t_k}{\lambda} , \xi \rangle} d \xi\nonumber\\ 
& = & \sum_k \langle S^{-1}_\lambda f_{\lambda,n} , S^{-1}_\lambda f_{\lambda,k} \rangle_{\lambda E} g \big( \cdotp -\frac{t_k}{\lambda} \big)\nonumber,
\end{eqnarray}
so (\ref{third}) follows from (\ref{first}).\\

\noindent Step 3: We show that\\
\begin{equation}
\langle S^{-1}_\lambda f_{\lambda,n} , S^{-1}_\lambda f_{\lambda,k} \rangle_{\lambda E} = \frac{1}{\lambda^d}\langle S^{-1} f_n , S^{-1} f_k \rangle_E, \quad \mathrm{for} \quad n,k \in \mathbb{N}.
\end{equation}

\noindent First we show $ (S^{-1}_\lambda f_{\lambda,n})(\cdotp) = \frac{1}{\lambda^d}(S^{-1} f_n)(\frac{\cdotp}{\lambda})$, or equivalently that $ f_{\lambda,n} = \frac{1}{\lambda^d}S_\lambda \big((S^{-1} f_n)(\frac{\cdotp}{\lambda})\big)$.\\

\noindent We have for any $g \in L_2 (\lambda E),$
\begin{eqnarray}
\langle g , f_{\lambda,k} \rangle_{\lambda E} & = & \int_{\lambda E} g(\xi) e ^{-i \langle \frac{\xi}{\lambda} , t_k \rangle} d \xi =  \lambda^d \int_E g(\lambda x) e ^{-i \langle x , t_k \rangle} d x =\lambda^d \langle g(\lambda(\cdot)) , f_k \rangle_E\nonumber.
\end{eqnarray}
By definition of the frame operator $S_\lambda$,
\begin{equation}
S_{\lambda} g = \sum_{k \in \mathbb{N}} \langle g , f_{\lambda,k} \rangle_{\lambda E} f_{\lambda,k}\nonumber,
\end{equation}
which then becomes
\begin{equation}
S_\lambda g = \lambda^d \sum_k \langle g(\lambda(\cdot)) , f_k \rangle_E f_{\lambda,k}.\nonumber
\end{equation}
Substituting $g = \frac{1}{\lambda^d}(S^{-1} f_n)(\frac{\cdotp}{\lambda})$ into the equation above we obtain 
\begin{eqnarray}
\frac{1}{\lambda^d} S_\lambda \big( (S^{-1} f_n) \big(\frac{\cdot}{\lambda} \big) \big) & = & \sum_k \langle S^{-1} f_n  , f_k \rangle_E f_{\lambda,k} = \big( S(S^{-1} f_n) \big) \big( \frac{\cdot}{\lambda} \big) = f_{\lambda,n}.\nonumber
\end{eqnarray}
We now compute the desired inner product:
\begin{eqnarray}
\langle S^{-1}_\lambda f_{\lambda,n} , S^{-1}_\lambda f_{\lambda,k} \rangle_{\lambda E} & = & \frac{1}{\lambda^{2d}}\int_{\lambda E} (S^{-1}f_n)\big(\frac{x}{\lambda}\big)\overline{(S^{-1}f_k)\big(\frac{x}{\lambda}\big)} dx \nonumber\\
& = &  \frac{\lambda^d}{\lambda^{2d}}\int_E (S^{-1} f_n)(x)\overline{(S^{-1} f_k)(x)} dx =  \frac{1}{\lambda^d}\langle S^{-1} f_n , S^{-1} f_k \rangle_E.\nonumber
\end{eqnarray}

\noindent Note that equation~(\ref{third}) becomes
\begin{equation}\label{fourth}
f(\cdotp) = \frac{1}{\lambda^d}\sum_n f \big( \frac{t_n}{\lambda} \big) \Big[ \sum_k \langle S^{-1} f_n , S^{-1} f_k \rangle g\big(\cdotp-\frac{t_k}{\lambda}\big) \big].
\end{equation}

\noindent Step 4:  The map $V:\ell_2 (\mathbb{N}) \rightarrow \ell_2 (\mathbb{N})$ given by $x=(x_k)_{k \in \mathbb{N}} \mapsto \big( \sum_n B_{kn} x_n \big)_{k \in \mathbb{N}} = Bx$ is bounded linear and self-adjoint.\\

\noindent  Let $(d_k)_{k \in \mathbb{N}}$ be the standard basis for $\ell_2 (\mathbb{N})$, and let $(e_k)_{k \in \mathbb{N}}$ be an orthonormal basis for $L_2(E)$.  Then
\begin{eqnarray}\label{matrixform}
V d_j & = & (B_{kj})_{k\in \mathbb{N}} = \sum_k B_{kj} d_k = \sum_k \langle S^{-1} f_j , S^{-1} f_k \rangle d_k =  \sum_k \langle L^*(S^{-1})^2 L e_j , e_k \rangle d_k,\nonumber
\end{eqnarray}
where $L$ is the preframe operator, i.e., $S =L L^*$.  Define $\phi:\ell^2(\mathbb{N}) \rightarrow L_2(E)$ by $\phi(d_k) = e_k$, $k \in \mathbb{N}$. Clearly $\phi$ is unitary.  It follows that $V = \phi^{-1} L^* (S^{-1})^2 L \phi$, which concludes Step 4.  From here on we identify $V$ with $B$. 
Clearly $B$ is an onto isomorphism iff $L$ and $L^*$ are both onto, i.e., iff the map $L e_n = f_n$ is an onto isomorphism.\\

\noindent Step 5: Verification of equation~(\ref{biggie}).  Recalling Definition \ref{seqdef}, $f_{S/\lambda} = \big( f\big(\frac{t_n}{\lambda}\big) \big)_{n \in \mathbb{N}}$; for each $t \in \mathbb{R}^d$, let $g_\lambda (t) = \big( g\big(t-\frac{t_n}{\lambda}\big) \big)_{n \in \mathbb{N}}$.  Noting that $f\big(\frac{\cdot}{\lambda} \big), g\big(t-\frac{\cdot}{\lambda}  \big) \in L_2 (\lambda E)$, and recalling that $(f_{\lambda,n})_n$ is a frame for $L_2(\lambda E)$, we have 
\begin{equation}
\sum_n \big| f\big( \frac{t_n}{\lambda} \big) \big|^2 = \sum |\langle \mathcal{F}^{-1}(f), f_{\lambda,n}\rangle_{\lambda E}|^2 \leq A_\lambda \Arrowvert \mathcal{F}^{-1}(f) \Arrowvert^2,
\end{equation}
and
\begin{equation}
\sum_n \big| g\big(t- \frac{t_n}{\lambda} \big) \big|^2 = \sum |\langle \mathcal{F}^{-1}\big( g\big(t- \frac{\cdot}{\lambda} \big)\big), f_{\lambda,n}\rangle_{\lambda E}|^2 \leq A_\lambda \Arrowvert \mathcal{F}^{-1}\big( g\big(t- \frac{\cdot}{\lambda} \big)\big) \Arrowvert^2.\nonumber
\end{equation}

\noindent Note that equation~(\ref{fourth}) becomes
\begin{eqnarray}\label{fifth}
f(t) & =  & \frac{1}{\lambda^d}\sum_n f \big( \frac{t_n}{\lambda} \big) \Big[ \sum_k B_{kn} g\big(t-\frac{t_k}{\lambda}\big) \Big]  =  \frac{1}{\lambda^d}\sum_n f \big( \frac{t_n}{\lambda} \big) \overline{\Big[ \sum_k B_{nk} \overline{g\big(t-\frac{t_k}{\lambda}\big)} \Big]}\nonumber\\
& = & \frac{1}{\lambda^d}\sum_n (f_{S/\lambda})_n (B\overline{g_\lambda (t)})_n =  \frac{1}{\lambda^d}\langle f_{S/\lambda} , B\overline{g_\lambda (t)} \rangle =  \frac{1}{\lambda^d} \langle B f_{S/\lambda} , \overline{g_\lambda (t)} \rangle\nonumber\\ & = & \frac{1}{\lambda^d}\sum_k (B f_{S/\lambda})_k g\big(t-\frac{t_k}{\lambda}\big) =  \frac{1}{\lambda^d}\sum_{k \in \mathbb{N}} \Big( \sum_{n \in \mathbb{N}} B_{kn} f\big( \frac{t_n}{\lambda} \big) \Big)\nonumber g\big(t-\frac{t_k}{\lambda}\big),
\end{eqnarray}
which proves (\ref{biggie}).\\

\noindent Step 6: We verify that convergence in equation~(\ref{biggie}) is in $L_2 (\mathbb{R})$ (hence uniform).  Define
\begin{equation}
f_n (t) = \frac{1}{\lambda^d}\sum_{1 \leq k \leq n} (B f_{S/\lambda})_k g\big(t-\frac{t_k}{\lambda}\big)\nonumber
\end{equation}
and
\begin{equation}
f_{m,n} (t) = \frac{1}{\lambda^d}\sum_{m \leq k \leq n} (B f_{S/\lambda})_k g\big(t-\frac{t_k}{\lambda}\big).\nonumber
\end{equation}
Then
\begin{eqnarray}
[\mathcal{F}^{-1}(f_{m,n})](\xi) & =  & \frac{1}{\lambda^d} \sum_{m \leq k \leq n} (B f_{S/\lambda})_k \mathcal{F}^{-1}\big[g\big(\cdotp-\frac{t_n}{\lambda} \big)  \big] \nonumber\\
& = & \frac{1}{\lambda^d} \sum_{m \leq k \leq n} (B f_{S/\lambda})_k \mathcal{F}^{-1}(g) (\xi) e^{i \langle \xi , \frac{t_k}{\lambda} \rangle},\nonumber
\end{eqnarray}
so
\begin{eqnarray}
\Arrowvert [\mathcal{F}^{-1}(f_{m,n})] \Arrowvert^2_2 & = & \frac{1}{\lambda^d} \int_{\lambda E} | \mathcal{F}^{-1}(g) (\xi)|^2 \Big| \sum_{m \leq k \leq n} (B f_{S/\lambda})_k e^{i \langle \xi , \frac{t_k}{\lambda} \rangle} \Big|^2 d \xi\nonumber\\
& \leq & \frac{1}{\lambda^d} \Big\Arrowvert \sum_{m \leq k \leq n}  (B f_{S/\lambda})_k f_{\lambda,k} \Big\Arrowvert^2_2.\nonumber
\end{eqnarray}
If $(h_n)_n$ is a orthonormal basis for $L_2(\lambda E)$, then the map $T h_k = f_{\lambda,k}$ (the preframe operator) is bounded linear, so
\begin{eqnarray}
\Arrowvert [\mathcal{F}^{-1}(f_{m,n})] \Arrowvert^2_2 & \leq & \frac{1}{\lambda^d} \Big\Arrowvert T \Big( \sum_{m \leq k \leq n} (B f_{S/\lambda})_k h_k \Big) \Big\Arrowvert^2_2 \leq  \frac{1}{\lambda^d} \Arrowvert T \Arrowvert^2 \sum_{m \leq k \leq n} | (B f_{S/\lambda})_k|^2.\nonumber
\end{eqnarray}
But $B f_{S/\lambda} \in \ell^2(\mathbb{N})$, so $\Arrowvert [\mathcal{F}^{-1}(f_{m,n})] \Arrowvert_2\rightarrow 0$ as $m,n \rightarrow \infty$.   As $\mathcal{F}^{-1}$ is an onto isomorphism, we have $\Arrowvert f_{m,n} \Arrowvert \rightarrow 0$, implying that $\Arrowvert f-f_n \Arrowvert \rightarrow 0$ as $n \rightarrow \infty$. \end{proof}

\noindent Note that equation~(\ref{main}) is conveniently written as
\begin{equation}\label{convenient}
f(t) = \frac{1}{\lambda^d}\sum_k (B f_{S/\lambda})_k g\big(t-\frac{t_k}{\lambda}\big), \quad t \in \mathbb{R}^d.
\end{equation}

\noindent Remark: There is a geometric characterization of sets $E \subset \mathbb{R}^d$ such that $E \subset \mathrm{int}(\lambda E)$ for all $\lambda >0$.  Intuitively, $E$ must be a ``continuous radial stretching of the closed unit ball''.  This is precisely formulated in the following proposition (whose proof is omitted).\\

\begin{prop}
If $0 \in E \subset \mathbb{R}^d$ is compact, then the following are equivalent:\\
1) $E \subset \mathrm{int}(\lambda E)$ for all $\lambda >1.$\\
2) There exists a continuous map $\phi:S^{d-1} \rightarrow (0, \infty)$ such that $ E = \{t y \phi(y)|y \in S^{d-1} , t \in [0,1]\}$.
\end{prop}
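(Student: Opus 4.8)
\noindent The plan is to prove the two implications separately; the substance is in $(1)\Rightarrow(2)$, while $(2)\Rightarrow(1)$ is essentially a continuity argument.

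\noindent For $(2)\Rightarrow(1)$ I would extend $\phi$ to the $0$-homogeneous radial function $\rho(x):=\phi(x/\|x\|)$ on $\R^d\setminus\{0\}$, which is continuous and bounded below by $m:=\min_{S^{d-1}}\phi>0$ (the minimum being attained on the compact sphere). From the representation one gets $E\setminus\{0\}=\{x\neq0:\|x\|\le\rho(x)\}$ (using that $E$ is star-shaped about $0$, which is immediate from the representation), and therefore $\lambda E\setminus\{0\}=\{x\neq0:\|x\|\le\lambda\rho(x)\}$ for every $\lambda>1$. Then for $x\in E\setminus\{0\}$ the strict inequality $\lambda\rho(x)-\|x\|\ge(\lambda-1)\rho(x)>0$ together with continuity of $z\mapsto\lambda\rho(z)-\|z\|$ puts $x$ inside the open set $\{z\neq0:\lambda\rho(z)>\|z\|\}\subset\lambda E$, so $E\setminus\{0\}\subset\mathrm{int}(\lambda E)$; and $B(0,\lambda m)\subset\lambda E$ takes care of the origin. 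This yields $(1)$.

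\noindent For $(1)\Rightarrow(2)$ I would proceed in five steps. (Step A) Show $E$ is star-shaped about $0$: for $x\in E$ and $t\in(0,1)$, apply $(1)$ with $\lambda=1/t>1$ to get $x\in\mathrm{int}(\lambda E)\subset\lambda E$, i.e.\ $tx\in E$; the cases $t\in\{0,1\}$ are trivial. (Step B) Define $\phi(y):=\max\{r\ge0:ry\in E\}$ for $y\in S^{d-1}$; compactness of $E$ makes this a finite maximum and, by Step A, $\{r\ge0:ry\in E\}=[0,\phi(y)]$; applying $(1)$ with $\lambda=2$ produces a ball $B(0,\delta)\subset E$, so $\phi\ge\delta>0$ on $S^{d-1}$, while $\phi\le\sup_{x\in E}\|x\|<\infty$. (Step C) Prove the key fact that for all $y\in S^{d-1}$ and $r\in(0,\phi(y))$ one has $ry\in\mathrm{int}(E)$: since $\phi(y)y\in E\subset\mathrm{int}(\lambda E)$ with $\lambda:=\phi(y)/r>1$, some ball $B(\phi(y)y,\varepsilon)$ lies in $\lambda E$, and dilating by $1/\lambda$ gives $B(ry,\varepsilon/\lambda)\subset E$. (Step D) Deduce that $\phi$ is continuous: upper semicontinuity follows from closedness of $E$ (if $y_n\to y$ and, along a subsequence, $\phi(y_n)\to L$, then $\phi(y_n)y_n\in E$ and $\phi(y_n)y_n\to Ly\in E$, so $L\le\phi(y)$), and lower semicontinuity follows from Step C (if $\phi(y_n)\to L<\phi(y)$ for some $y_n\to y$, pick $r\in(L,\phi(y))$; then $ry\in\mathrm{int}(E)$ forces $ry_n\in E$ for large $n$, hence $\phi(y_n)\ge r>L$, a contradiction). (Step E) Conclude the representation $E=\{t\,y\,\phi(y):y\in S^{d-1},t\in[0,1]\}$: the inclusion ``$\supseteq$'' is Step A applied to $\phi(y)y\in E$, and ``$\subseteq$'' writes $x\neq0$ as $t\,y\,\phi(y)$ with $y=x/\|x\|$ and $t=\|x\|/\phi(y)\in(0,1]$ (using $\|x\|\le\phi(x/\|x\|)$), together with $0=0\cdot y\cdot\phi(y)$.

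\noindent The main obstacle is Step C and the lower-semicontinuity half of Step D: star-shapedness alone permits the radial function to drop discontinuously across nearby directions, and it is precisely hypothesis $(1)$ --- reformulated in Step C as the statement that every point radially interior to $E$ is a topological interior point --- that rules this out and yields continuity of $\phi$. Everything else (the u.s.c.\ half, Steps A, B, E) uses only that $E$ is compact and contains $0$.
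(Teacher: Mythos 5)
The paper omits the proof of this proposition, so there is nothing to compare your argument against; your proof is correct and complete. The decisive point is indeed your Step C, which converts hypothesis (1) into the statement that radially interior points of $E$ are topologically interior, and this is exactly what upgrades the radial function $\phi$ from merely upper semicontinuous (which compactness of $E$ alone provides) to continuous; the remaining steps, including the reformulation $E\setminus\{0\}=\{x\neq 0:\|x\|\le\phi(x/\|x\|)\}$ used for $(2)\Rightarrow(1)$, are routine and correctly executed.
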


\noindent The following is a simplified version of Theorem~\ref{main}, which is proven in a similiar fashion:
\begin{thm}\label{hmm}
Choose $(t_n)_{n \in \mathbb{N}} \subset \mathbb{R}^d$ such that $(f_n)_{n \in \mathbb{N}}$, defined by $f_n(\cdotp) = \frac{1}{(2\pi)^{d/2}}e^{i \langle \cdotp , t_n \rangle}$, is a frame for $L_2([-\pi,\pi]^d)$.  If $f \in PW_E$, then
\begin{equation}\label{nonoversampling}
f(t) = \sum_{k \in \mathbb{N}} \Big( \sum_{n \in \mathbb{N}} B_{kn} f(t_n) \Big) \mathrm{SINC}(\pi(t-t_k)), \quad t \in \mathbb{R}^d.
\end{equation}
The matrix $B$ and the convergence of the sum are as in Theorem~\ref{main}.
\end{thm}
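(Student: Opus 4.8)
The plan is to run the argument of Theorem~\ref{main} with $\lambda=1$, degenerating the smooth bump $\mathcal{F}^{-1}(g)$ into the characteristic function $\chi:=\mathbf{1}_{[-\pi,\pi]^d}$; correspondingly $g$ is replaced by $\mathcal{F}(\chi)=(2\pi)^{d/2}\mathrm{SINC}(\pi\cdot)$, which, because of the normalization $(2\pi)^{-d/2}$ built into the $f_n$, plays exactly the structural role $g$ had (here $E=[-\pi,\pi]^d$ is meant in the hypothesis $f\in PW_E$). The substitution is legitimate because every function we expand --- namely $\mathcal{F}^{-1}(f)$ and each $S^{-1}f_n$ --- already lies in $L_2(E)$, so multiplication by $\chi$ acts on it as the identity (equivalently, we simply extend $S^{-1}f_n$ by zero off $E$). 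The only price paid is that the translates $\mathrm{SINC}(\pi(\cdot-t_k))$ decay merely like $\mathrm{SINC}$ rather than rapidly, which is still enough for the $\ell_2$-type estimates.

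Concretely, since $\mathrm{supp}\,\mathcal{F}^{-1}(f)\subset E$, the frame decomposition gives $\mathcal{F}^{-1}(f)=S^{-1}S\mathcal{F}^{-1}(f)=\sum_n\langle\mathcal{F}^{-1}(f),f_n\rangle_E\,S^{-1}f_n$ in $L_2(E)$, hence in $L_2(\mathbb{R}^d)$ after zero-extension, and directly from the definition of $\mathcal{F}$ one gets $\langle\mathcal{F}^{-1}(f),f_n\rangle_E=(2\pi)^{-d/2}f(t_n)$. Applying the dual-frame identity $h=\sum_k\langle h,S^{-1}f_k\rangle_E f_k$ on $L_2(E)$ with $h=S^{-1}f_n$ (exactly Step~2 of the proof of Theorem~\ref{main}) yields $S^{-1}f_n=\sum_k B_{kn}f_k$ with $B_{kn}=\langle S^{-1}f_n,S^{-1}f_k\rangle_E$. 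Taking Fourier transforms and using $\mathcal{F}[f_k\chi](t)=(2\pi)^{d/2}\mathrm{SINC}(\pi(t-t_k))$, the two factors $(2\pi)^{\pm d/2}$ cancel and we reach $f(t)=\sum_n f(t_n)\sum_k B_{kn}\mathrm{SINC}(\pi(t-t_k))$. Interchanging the double sum to get the asserted form is done as in Steps~4--5 of Theorem~\ref{main}: $B$ is bounded and self-adjoint on $\ell_2(\mathbb{N})$, $(f(t_n))_n\in\ell_2$ by the upper frame bound, and for each fixed $t$, $(\mathrm{SINC}(\pi(t-t_k)))_k\in\ell_2$ because $\mathrm{SINC}(\pi(t-\cdot))\in PW_E$ (its inverse transform is $(2\pi)^{-d}e^{i\langle\cdot,t\rangle}\chi$), so pairing it against the frame produces an $\ell_2$ sequence. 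Finally, $L_2(\mathbb{R}^d)$ (hence uniform) convergence of the displayed series is the verbatim analogue of Step~6: the inverse transform of a tail $\sum_{m\le k\le n}(Bf_S)_k\mathrm{SINC}(\pi(\cdot-t_k))$ is $(2\pi)^{-d}\chi(\xi)\sum_{m\le k\le n}(Bf_S)_k e^{i\langle\xi,t_k\rangle}$, whose $L_2$-norm is bounded through the preframe operator $Th_k=f_k$ on $L_2(E)$ by $\|T\|^2\sum_{m\le k\le n}|(Bf_S)_k|^2\to0$. The claims that $B$ is bounded linear and that it is an onto isomorphism iff $(f_n)_n$ is a Riesz basis are imported unchanged from Theorem~\ref{main}.

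There is no genuine obstacle --- the theorem really is ``proven in a similar fashion'' --- but the point that needs care is the bookkeeping of the normalizing constant: one must verify that the factor $(2\pi)^{-d/2}$ in the definition of $f_n$ is precisely what makes every power of $2\pi$ disappear from~(\ref{nonoversampling}), and that $\mathcal{F}[f_k\chi]=(2\pi)^{d/2}\mathrm{SINC}(\pi(\cdot-t_k))$ as claimed, so that $\mathrm{SINC}$ genuinely inherits the reproducing-kernel role $g$ had. As a sanity check, when $(t_n)_n$ enumerates $\mathbb{Z}^d$ the $f_n$ form an orthonormal basis of $L_2([-\pi,\pi]^d)$, so $S=\Id$, $B_{kn}=\delta_{kn}$, and~(\ref{nonoversampling}) collapses to the $d$-dimensional WKS theorem.
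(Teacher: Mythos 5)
Your proposal is correct and is exactly the argument the paper intends: the paper gives no separate proof of Theorem~\ref{hmm}, merely asserting it is ``proven in a similar fashion'' to Theorem~\ref{main}, and your instantiation with $\lambda=1$, the bump replaced by $\mathbf{1}_{[-\pi,\pi]^d}$, and $\mathcal{F}[f_k\mathbf{1}_{[-\pi,\pi]^d}]=(2\pi)^{d/2}\mathrm{SINC}(\pi(\cdot-t_k))$ is the faithful way to carry that out. The bookkeeping of the $(2\pi)^{\pm d/2}$ factors and the $\ell_2$ membership of $(\mathrm{SINC}(\pi(t-t_k)))_k$ are handled correctly, and the WKS sanity check is apt.
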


\noindent Equation~(\ref{biggie}) generalizes equation~(\ref{nonoversampling}) in the same way that equation~(\ref{DDD}) generalizes the WKS equation.\\

\noindent We can write equation~(\ref{nonoversampling}) as
\begin{equation}
f(t) = \sum_{k \in \mathbb{N}} (B f_S)_k \mathrm{SINC}(\pi(t-t_k)).
\end{equation}
The preceding result is similar in spirit to Theorem 1.9 in \cite[page 19]{B}.\\

\noindent Frames for $L_2 (E)$ satisfying the conditions in Theorems~\ref{main} and~\ref{hmm} occur in abundance. The following result is due to Beurling in \cite[see Theorem 1, Theorem 2, and (38)]{Be}.

\begin{thm}
Let $\Lambda \subset \mathbb{R}^d$ be countable such that
\begin{eqnarray}
r(\Lambda) & :=  & \frac{1}{2}\inf_{\lambda, \mu \in  \Lambda, \lambda \neq \mu} \Arrowvert \lambda - \mu \Arrowvert_2 >0\nonumber \\
\mathrm{and} \quad R(\Lambda) & := & \sup_{\xi \in \mathbb{R}^d} \inf_{\lambda \in \Lambda} \Arrowvert \lambda - \mu \Arrowvert_2 < \frac{\pi}{2}.\nonumber
\end{eqnarray}
If $E$ is a subset of the closed unit ball in $\mathbb{R}^d$ and $E$ has positive measure, then $\{e^{i\langle \cdot , \lambda \rangle} | \lambda \in \Lambda\}$ is a frame for $L_2(E)$.
\end{thm}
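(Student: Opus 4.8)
\emph{The plan} is to convert the assertion into a sampling inequality, reduce to the full unit ball, settle the upper bound using only the separation $r(\Lambda)>0$, and isolate the lower bound --- where the hypothesis $R(\Lambda)<\pi/2$ enters --- as the genuine difficulty. For the conversion I would invoke the correspondence between frames of exponentials and sampling recorded in Section~\ref{S:1}: for $f\in PW_E$ the function $u=\mathcal{F}^{-1}(f)$ lies in $L_2(E)$ with $\langle u,e^{i\langle\cdot,\lambda\rangle}\rangle_E=f(\lambda)$, and $u$ runs over all of $L_2(E)$ as $f$ runs over $PW_E$; hence, up to the Plancherel constant, $\{e^{i\langle\cdot,\lambda\rangle}\}_{\lambda\in\Lambda}$ is a frame for $L_2(E)$ if and only if there are $0<A\le B$ with $A\|f\|_2^2\le\sum_{\lambda\in\Lambda}|f(\lambda)|^2\le B\|f\|_2^2$ for all $f\in PW_E$. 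Since $E$ lies in the closed unit ball $\overline{B}:=\overline{B(0,1)}$, we have $PW_E\subset PW_{\overline{B}}$, so it is enough to prove this inequality for every $f\in PW_{\overline{B}}$; the resulting constants then serve for $L_2(E)$ verbatim. Thus I would drop $E$ and work with $\overline{B}$.

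The upper bound is the Plancherel--P\'olya inequality and uses only $r(\Lambda)>0$. Fixing $\chi\in C_c^\infty(\R^d)$ with $\chi\equiv1$ on $\overline{B}$, one has $\mathcal{F}^{-1}(f)=\chi\,\mathcal{F}^{-1}(f)$ for $f\in PW_{\overline{B}}$, so $f(\lambda)=\langle\chi\,\mathcal{F}^{-1}(f),\,e^{i\langle\cdot,\lambda\rangle}\rangle_{L_2(\mathrm{supp}\,\chi)}$; hence $\sum_\lambda|f(\lambda)|^2$ is dominated by a Bessel bound for $\{e^{i\langle\cdot,\lambda\rangle}\}_{\lambda\in\Lambda}$ on $L_2(\mathrm{supp}\,\chi)$ applied to $\chi\,\mathcal{F}^{-1}(f)$, and such a bound holds for every uniformly separated $\Lambda$ (partition $\R^d$ into unit cubes, observe that each meets a bounded number of $\lambda$'s, and use the rapid decay of the inverse transform of $\chi\,e^{i\langle\cdot,\lambda\rangle}$ in an almost-orthogonality estimate). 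This yields the right-hand inequality with $B$ depending only on $r(\Lambda)$ and $d$.

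For the lower bound I would set $D:=R(\Lambda)<\pi/2$ and use the Voronoi cells $\{V_\lambda\}_{\lambda\in\Lambda}$ of $\Lambda$: then $\R^d=\bigcup_\lambda V_\lambda$ up to a null set, the interiors are pairwise disjoint, $\mathrm{vol}(V_\lambda)\le\mathrm{vol}(B(0,D))$, and $\xi\in V_\lambda$ forces $\|\xi-\lambda\|\le D$. Starting from $\|f\|_2^2=\sum_\lambda\int_{V_\lambda}|f|^2$, I would compare $\int_{V_\lambda}|f(\xi)|^2\,d\xi$ with $\mathrm{vol}(V_\lambda)\,|f(\lambda)|^2$, the discrepancy being controlled by the oscillation of $f$ over distances $\le D$. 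The tool is Bernstein's inequality for $PW_{\overline{B}}$ --- for instance $\|\nabla f\|_2\le\|f\|_2$, and, more sharply, the fact that every one-dimensional slice of $f$ has exponential type $\le1$ --- which, after summing the cellwise discrepancies and using the bounded overlap of slightly dilated Voronoi cells, should bound the total error by $\theta(D)\|f\|_2^2$ with $\theta(D)<1$. Then $\sum_\lambda\mathrm{vol}(V_\lambda)|f(\lambda)|^2\ge(1-\theta(D))\|f\|_2^2$, and dividing by $\mathrm{vol}(B(0,D))$ gives the left-hand inequality with a positive constant depending only on $r(\Lambda)$, $D$, and $d$.

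The hard part --- indeed the whole content of Beurling's theorem --- is forcing $\theta(D)<1$ all the way up to $D<\pi/2$. The crude estimate $|f(\xi)-f(\lambda)|\le D\,\sup_{[\lambda,\xi]}\|\nabla f\|$ yields only a smaller, non-sharp threshold; recovering the exact constant $\pi/2$ requires the sharp Bernstein-type oscillation inequality for functions bandlimited to the unit ball, whose extremal case is $\cos$, bandlimited to $[-1,1]$ and vanishing on the $\tfrac{\pi}{2}$-net $\tfrac{\pi}{2}+\pi\Z$. Equivalently, a normal-families / weak-limit argument reduces the lower bound to the uniqueness statement that a nonzero element of $PW_{\overline{B}}$ cannot vanish on a $D$-net with $D<\pi/2$, or again to the admissibility of balayage onto $\Lambda$ for the spectrum $\overline{B}$. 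This sharp ingredient is exactly what is established in \cite{Be}, which we therefore cite rather than reprove.
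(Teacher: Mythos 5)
The paper does not actually prove this theorem: it is quoted as a known result of Beurling, with the pointer \cite[Theorem 1, Theorem 2, and (38)]{Be}, and is used only to show that frames satisfying the hypotheses of Theorems~\ref{main} and~\ref{hmm} exist in abundance. Your outline is therefore not competing with an argument in the paper; it supplies the standard reduction that the paper leaves implicit and then, for the one genuinely deep step, falls back on the same citation. The reduction is sound: the identity $\langle \mathcal{F}^{-1}(f), e^{i\langle\cdot,\lambda\rangle}\rangle_E = f(\lambda)$ converts the frame inequality into a sampling inequality for $PW_E$; the inclusion $PW_E\subset PW_{\overline{B}}$ (valid because $E$ sits inside the closed unit ball) lets you prove that inequality once for the ball; and the Plancherel--P\'olya upper bound from $r(\Lambda)>0$ alone is classical. (You also silently repair the typo in the statement, where $\inf_{\lambda\in\Lambda}\Arrowvert\lambda-\mu\Arrowvert_2$ should read $\inf_{\lambda\in\Lambda}\Arrowvert\lambda-\xi\Arrowvert_2$; your reading of $R(\Lambda)$ as the covering radius is the intended one.) Two caveats on the lower bound, both of which you essentially acknowledge: the Voronoi-cell computation with $\Arrowvert\nabla f\Arrowvert_2\le\Arrowvert f\Arrowvert_2$ cannot reach the threshold $\pi/2$, so that paragraph is motivation rather than proof; and the ``normal families / weak-limit'' reduction of an $L_2$ sampling inequality to a uniqueness statement is itself nontrivial and is part of what Beurling's balayage machinery accomplishes. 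The honest formulation is thus exactly the one you end with --- the lower frame bound for $R(\Lambda)<\pi/2$ is the content of \cite{Be} and is cited, not reproved --- which matches the paper's treatment while being rather more informative about where the difficulty lives.
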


\section{Remarks regarding the stability of Theorem~\ref{main}}\label{S:3}

\noindent A desirable trait in a recovery formula is stability given error in the sampled data.  Suppose we have sample values $\tilde{f}_n = f \big( \frac{n}{\lambda} \big)+\epsilon_n$ where $\sup_{n} |\epsilon_n| =\epsilon$.  If in equation~(\ref{DDD}) we replace $f \big( \frac{n}{\lambda} \big)$ by $\tilde{f}_n$, and call the resulting expression $\tilde{f}$, then we have
\begin{equation}
|f(t)-\tilde{f}(t)| \leq \epsilon \frac{1}{\lambda}\sum_{n \in \mathbb{Z}} \Big| g\big(t -\frac{n}{\lambda} \big) \Big| \leq \epsilon (\lambda^{-1} \Arrowvert g' \Arrowvert_{L_1} + \Arrowvert g \Arrowvert_{L_1}).\nonumber
\end{equation}

\noindent It follows that equation~(\ref{DDD}) is certainly stable under $\ell_\infty$ perturbations in the data, while the WKS sampling Theorem is not.  For a more detailed discussion see \cite{DD}.\\

\noindent Such a stability result is not immediately forthcoming for equation~(\ref{biggie}), as the following example illustrates.\\

\noindent Restricting to $d=1$, let $(t_n)_{n \in \mathbb{Z}}$ satisfy $t_0 = D \notin \mathbb{Z}$, and $t_n = n$ for $n \neq 0$.  The forthcoming discussion in Section 5 shows that $(f_n)_{n \in \mathbb{Z}}$ is a Riesz basis for $L_2[-\pi,\pi]$.\\

\noindent Note that when $(f_n)_n$ is a Riesz basis, the sequence $(S^{-1} f_n)_n$ is its biorthogonal sequence.  We matrix $B$ associated to this basis is computed as follows.\\

\noindent The biorthogonal functions $(G_n)_{n \in \mathbb{Z}}$ for $(\mathrm{sinc}(\pi(\cdot-n)))_{n \in \mathbb{Z}}$ 
are
\begin{eqnarray}
G_n (t) & = &\frac{(-1)^n n (t-D) \mathrm{sinc}(\pi t)}{(n-D)(t-n)},\quad n \neq 0, \quad \mathrm{and}\nonumber\\
G_0 (t) & = &\frac{\mathrm{sinc}(\pi t)}{\mathrm{sinc}(\pi D)}.\nonumber
\end{eqnarray}
That these functions are in $PW_{[-\pi,\pi]}$ is verified by applying the Paley-Wiener Theorem \cite[page 85]{Y}, and the biorthogonality condition is verified by applying equation~(\ref{repker}). Again using equation~(\ref{repker}), we obtain
\begin{eqnarray}
& i) & \quad B_{m0} = \langle G_0, G_m \rangle = \frac{D(-1)^m}{\mathrm{sinc}(\pi D)(m-D)}, \quad m \neq 0, \nonumber\\
& ii) & \quad B_{00} = \langle G_0, G_0 \rangle = \frac{1}{\mathrm{sinc}^2 (\pi D)}, \nonumber\\
& iii) & \quad B_{mn} = \langle G_n, G_m \rangle = \delta_{nm} + \frac{D^2 (-1)^{n+m}}{(n-D)(m-D)}, \quad \mathrm{else}. \nonumber
\end{eqnarray}

\noindent Note that the rows of $B$ are not in $\ell_1$, so that as an operator acting on $\ell_\infty$, $B$ does not act boundedly. Consequently, the equation
\begin{equation}\label{perturbed}
\tilde{f}(t) = \frac{1}{\lambda}\sum_k (B \tilde{f}_{S/\lambda})_k g\big(t-\frac{t_k}{\lambda}\big)
\end{equation}
is not defined for all perturbed sequences $\tilde{f}_{S/\lambda}$ where $(\tilde{f}_{S/\lambda})_n = (f_{S/\lambda})_n + \epsilon_n$ where $\sup_{n} |\epsilon_n| =\epsilon$.\\

\noindent Despite the above failure, the following shows that there is some advantage of equation (\ref{biggie}) over equation (\ref{nonoversampling}).\\

\noindent If $\tilde{f}_{S/\lambda}$ is \textit{some} perturbation of $f_{S/\lambda}$ such that $\Arrowvert B\tilde{f}_{S/\lambda}-B f_{S/\lambda}  \Arrowvert_\infty \leq \epsilon$, then
\begin{equation}
\sup_{t \in \mathbb{R}^d}|f(t)-\tilde{f}(t)|\leq \epsilon \sum_k \Big| g\big(t-\frac{t_k}{\lambda}\big) \Big|.
\end{equation}

\section{Restriction of the sampling Theorem to the case where the exponential frame is a Riesz basis}\label{S:4}

\noindent From here on, we focus on the case where $(t_n)_{n \in \mathbb{N}}$ is an $\ell_\infty$ perturbation of the lattice $\mathbb{Z}^d$, and $(f_n)_{n \in \mathbb{N}}$ is a Riesz basis for $L_2[-\pi,\pi]^d$.  In this case, under the additional constraint that the sample nodes are asymptotically the integer lattice, the following theorem gives a computationally feasible version of equation~(\ref{biggie}) . The summands in equation~(\ref{biggie}) involves an infinite invertible matrix $B$, though under the constraints mentioned above, we show that $B$ can be replaced by a related finite-rank operator which can be computed concretely.  Precisely, one has the following.\\

\begin{thm}\label{main2}
Let $(n_k)_{k \in \mathbb{N}}$ be an enumeration of $\mathbb{Z}^d$, and $S=(t_k)_{k \in \mathbb{N}} \subset \mathbb{R}^d$ such that $$\lim_{k \rightarrow \infty} \Arrowvert n_k - t_k \Arrowvert_\infty = 0.$$  Define $e_k,f_k:\mathbb{R}^d \rightarrow \mathbb{C}$ by $e_k (x) = \frac{1}{(2\pi)^{d/2}} e^{i \langle n_k , x \rangle}$ and $\frac{1}{(2\pi)^{d/2}} e^{i \langle t_k , x \rangle}$, and let $(h_k)_k$ be the standard basis for $\ell_2(\mathbb{N})$.  Let $P_l:\ell_2(\mathbb{N}) \rightarrow \ell_2(\mathbb{N})$ be the orthogonal projection onto $\mathrm{span}\{h_1,\cdots,h_l\}$.  If $(f_k)_{k \in \mathbb{N}}$ is a Riesz basis for $L_2[-\pi,\pi]^d$, then for all $f \in PW_{[-\pi,\pi]^d}$, we have
\begin{equation}\label{truncatedequation}
f(t) = \lim_{l \rightarrow \infty} \frac{1}{\lambda^d}\sum_{k=1}^l [(P_l B^{-1} P_l)^{-1} f_{S/\lambda}]_k g\big( t-\frac{t_k}{\lambda} \big), \quad t \in \mathbb{R}^d, 
\end{equation}
where convergence is in $L_2$ and uniform.  Furthermore,
\begin{displaymath}
   (P_l B^{-1} P_l)_{nm} = \left\{
     \begin{array}{lr}
       \mathrm{sinc} \pi(t_{n,1}-t_{m,1}) \cdot \ldots \cdot \mathrm{sinc} \pi(t_{n,d}-t_{m,d}),  &  1 \leq n,m \leq l\\
       0, &  \mathrm{otherwise.}
     \end{array}
   \right.
\end{displaymath} 
Convergence of the sum is in $L_2$ and also uniform.
\end{thm}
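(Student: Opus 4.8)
\noindent\emph{Proof proposal.} The plan is to reduce the theorem to two ingredients: an exact description of $B^{-1}$, and the convergence of the finite-section (Galerkin) approximants of $B$ applied to the data vector $f_{S/\lambda}$. Throughout I would work in the setting of Theorem~\ref{main} with $E=[-\pi,\pi]^d$ (which does satisfy $E\subset\mathrm{int}(\lambda E)$ for all $\lambda>1$), $\lambda\geq\lambda_0>1$, $g$ the associated bump, and $H:=L_2([-\pi,\pi]^d)$. By the oversampling formula~(\ref{convenient}) (with the normalization of Theorem~\ref{hmm}), $f=\Psi(Bf_{S/\lambda})$, where $\Psi\colon\ell_2(\mathbb{N})\to L_2(\mathbb{R}^d)$ is the synthesis map $\Psi(x)=\frac{1}{\lambda^d}\sum_k x_k\, g\big(\cdot-\frac{t_k}{\lambda}\big)$. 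Running the estimate of Step 6 of the proof of Theorem~\ref{main} verbatim, but with an arbitrary $\ell_2$ sequence in place of $Bf_{S/\lambda}$, shows that $\Psi$ is bounded with range inside $PW_{\lambda[-\pi,\pi]^d}$; since $L_2$-convergence there forces uniform convergence, it suffices to prove that the vectors $c^{(l)}:=(P_lB^{-1}P_l)^{-1}f_{S/\lambda}$ are well defined and converge in $\ell_2$ to $Bf_{S/\lambda}$ — here $(P_lB^{-1}P_l)^{-1}$ means the inverse of the restriction of $P_lB^{-1}P_l$ to $V_l:=\mathrm{span}\{h_1,\dots,h_l\}$, applied after $P_l$, so that $\Psi(c^{(l)})$ is exactly the $l$-th partial sum in~(\ref{truncatedequation}); note $f_{S/\lambda}\in\ell_2$ by Step 5 of the proof of Theorem~\ref{main}.

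\noindent\emph{Identifying $B^{-1}$.} Since $(f_k)_k$ is a Riesz basis, the synthesis operator $T\colon\ell_2\to H$, $Th_k=f_k$, is an isomorphism onto $H$, with biorthogonal system $(S^{-1}f_k)_k$; letting $\tilde T h_k=S^{-1}f_k$, the biorthogonality relation reads $\tilde T^*T=I$, hence $\tilde T=(T^*)^{-1}$. Because $B$ is the Gram matrix of $(S^{-1}f_k)_k$, i.e.\ $B=\tilde T^*\tilde T=(T^*T)^{-1}$, we obtain $B^{-1}=T^*T$, so that $(B^{-1})_{nm}=\langle f_m,f_n\rangle=\frac{1}{(2\pi)^d}\int_{[-\pi,\pi]^d}e^{i\langle t_m-t_n,x\rangle}\,dx=\prod_{j=1}^d\mathrm{sinc}\,\pi(t_{m,j}-t_{n,j})$ (the factor $(2\pi)^{-d}$ from the normalization of $f_k$ cancels the integral's $(2\pi)^d$); as $\mathrm{sinc}$ is even this is the asserted formula, and compressing by the $P_l$'s annihilates every entry with an index exceeding $l$. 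Moreover $B^{-1}=T^*T$ is self-adjoint and satisfies $cI\leq B^{-1}\leq CI$ for suitable $0<c\leq C$, since $T$ is bounded and bounded below.

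\noindent\emph{Finite-section convergence.} For every $l$, positivity gives $\langle(P_lB^{-1}P_l)v,v\rangle=\langle B^{-1}v,v\rangle\geq c\|v\|^2$ for $v\in V_l$, so the restriction of $P_lB^{-1}P_l$ to $V_l$ is invertible with inverse of norm $\leq 1/c$; hence $c^{(l)}\in V_l$ is well defined for \emph{every} $l$. The identity $P_lB^{-1}c^{(l)}=P_lf_{S/\lambda}$ says $\langle B^{-1}c^{(l)},v\rangle=\langle f_{S/\lambda},v\rangle$ for all $v\in V_l$; since also $B^{-1}(Bf_{S/\lambda})=f_{S/\lambda}$, subtracting shows $Bf_{S/\lambda}-c^{(l)}$ is orthogonal to $V_l$ in the inner product $\langle B^{-1}\cdot,\cdot\rangle$, i.e.\ $c^{(l)}$ is the $\langle B^{-1}\cdot,\cdot\rangle$-orthogonal projection of $Bf_{S/\lambda}$ onto $V_l$. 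Therefore $\|Bf_{S/\lambda}-c^{(l)}\|_{B^{-1}}\leq\|Bf_{S/\lambda}-P_lBf_{S/\lambda}\|_{B^{-1}}\leq\sqrt C\,\|Bf_{S/\lambda}-P_lBf_{S/\lambda}\|\to 0$ as $l\to\infty$ (because $P_l\to I$ strongly), and $\|\cdot\|\leq c^{-1/2}\|\cdot\|_{B^{-1}}$ yields $c^{(l)}\to Bf_{S/\lambda}$ in $\ell_2$; consequently $\Psi(c^{(l)})\to\Psi(Bf_{S/\lambda})=f$ in $L_2(\mathbb{R}^d)$, hence uniformly, which is~(\ref{truncatedequation}).

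\noindent\emph{Main obstacle.} Once one notices $B^{-1}=T^*T$, everything is routine. The two points that genuinely need care are: making the symbol $(P_lB^{-1}P_l)^{-1}$ precise, namely inversion on the finite-dimensional space $V_l$, and verifying that this is legitimate for \emph{all} $l$ (not merely large $l$) — which is exactly what the uniform lower bound $B^{-1}\geq cI$ provides; and checking that the Step-6 boundedness estimate of Theorem~\ref{main} really does carry over word for word to an arbitrary $\ell_2$ input, so that $\Psi$ is a bona fide bounded operator into $PW_{\lambda[-\pi,\pi]^d}$.
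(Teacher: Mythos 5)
Your proposal is correct, and its endgame differs genuinely from the paper's. Both arguments begin by identifying $B^{-1}$ as the Gram matrix of the Riesz basis ($B^{-1}=T^*T$ with $Th_k=f_k$ in your notation), whence the $\mathrm{sinc}$ formula for the entries; the divergence is in how the finite sections are shown to converge. The paper uses the hypothesis $\lim_k\Arrowvert n_k-t_k\Arrowvert_\infty=0$ to make the perturbation $e_k\mapsto e_k-f_k$ a norm-limit of finite-rank operators (Corollary \ref{truncate}), hence compact; writing $B^{-1}=I+\Delta$ with $\Delta$ compact gives $P_lB^{-1}P_l\to B^{-1}$ and then $B_l:=I-P_l+(P_lB^{-1}P_l)^{-1}\to B$ in \emph{operator norm}, after which the error in (\ref{truncatedequation}) is split into an $\Arrowvert B-B_l\Arrowvert$ term plus the tail $\sum_{k>l}f(t_k/\lambda)\,g(\cdot-t_k/\lambda)$. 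You instead observe that the Riesz-basis property alone forces $cI\le B^{-1}\le CI$ and run C\'ea's lemma: the compressions are uniformly invertible on $V_l$ and $c^{(l)}$ is the $\langle B^{-1}\cdot,\cdot\rangle$-orthogonal projection of $Bf_{S/\lambda}$ onto $V_l$, so $c^{(l)}\to Bf_{S/\lambda}$ strongly. Your route buys generality --- the asymptotic condition $\Arrowvert n_k-t_k\Arrowvert_\infty\to0$ is never invoked, only the Riesz-basis hypothesis --- and it settles invertibility of the truncated matrices with the uniform bound $1/c$, where the paper only argues linear independence of $(f_1,\dots,f_l)$. What the paper's route buys in exchange is norm convergence $B_l\to B$, i.e.\ a rate for the operator part that is uniform over bounded sets of data vectors, which strong convergence alone does not provide. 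Your treatment of the synthesis map $\Psi$ (rerunning Step 6 of the proof of Theorem \ref{main} with an arbitrary $\ell_2$ input to get boundedness into $PW_{\lambda[-\pi,\pi]^d}$, where $L_2$ convergence is uniform convergence) is exactly what is needed and parallels the paper's handling of the tail term.
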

\noindent There is a slight abuse of notation in the formula above. The matrix $P_l B^{-1} P_l$ is clearly not invertible as an operator on $\ell_2$, and it should be interpreted as the inverse of an $l \times l$ matrix acting on the first $l$ coordinates of $f_{S/\lambda}$.\\

\noindent The following version of Theorem~\ref{main2} avoids oversampling.  Its proof is similar to that of Theorem~\ref{main2}.\\

\begin{thm}\label{main3}
Under the hypotheses of Theorem \ref{main2},
\begin{equation}
f(t) = \lim_{l \rightarrow \infty}\sum_{k=1}^l [(P_l B^{-1} P_l)^{-1} f_S]_k \mathrm{SINC}( t-t_k), \quad t \in \mathbb{R}^d,
\end{equation}
where convergence of the sum is both $L_2$ and uniform.
\end{thm}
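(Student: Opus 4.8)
The plan is to derive Theorem~\ref{main3} from Theorem~\ref{hmm} in exactly the way Theorem~\ref{main2} is derived from Theorem~\ref{main}: set the oversampling parameter to $\lambda=1$, work in $L_2[-\pi,\pi]^d$ throughout instead of in $L_2(\lambda E)$, and replace the bump function $g$ by the reproducing kernel $\mathrm{SINC}(\pi(\cdot))$. Theorem~\ref{hmm} already gives, for $f\in PW_{[-\pi,\pi]^d}$,
\begin{equation}
f(t)=\sum_{k\in\mathbb{N}}(Bf_S)_k\,\mathrm{SINC}(\pi(t-t_k)),\qquad t\in\mathbb{R}^d,\nonumber
\end{equation}
with $L_2(\mathbb{R}^d)$ (hence uniform) convergence, where $B_{kn}=\langle S^{-1}f_n,S^{-1}f_k\rangle_{[-\pi,\pi]^d}$ and $S$ is the frame operator of $(f_k)_k$. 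So it suffices to show that the finite-section coefficient vectors $c^{(l)}:=(P_lB^{-1}P_l)^{-1}f_S$ (the inverse of the leading $l\times l$ block, applied to the first $l$ coordinates of $f_S$) converge to $Bf_S$ in $\ell_2$, and to identify the entries of that block.

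The algebraic core is the identity $B^{-1}=\Gamma$, where $\Gamma$ is the Gram matrix of $(f_k)_k$, i.e.\ $\Gamma_{nm}=\langle f_n,f_m\rangle_{[-\pi,\pi]^d}$. Using Step~4 of the proof of Theorem~\ref{main}, which identifies $B$ (via the canonical unitary $\phi$) with $\phi^{-1}L^*(S^{-1})^2L\phi$, where $L$ is the preframe operator and $S=LL^*$, and noting that $\Gamma$ is likewise identified with $\phi^{-1}L^*L\phi$, one computes
\begin{equation}
B\Gamma\;=\;\phi^{-1}L^*(S^{-1})^2L\,L^*L\,\phi\;=\;\phi^{-1}L^*(S^{-1})^2SL\,\phi\;=\;\phi^{-1}L^*(LL^*)^{-1}L\,\phi\;=\;I,\nonumber
\end{equation}
the last equality using that $(f_k)_k$ is a Riesz basis, so $L$ is invertible; this is precisely where the Riesz hypothesis, rather than merely a frame hypothesis, is needed. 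A one-line integral computation then gives $\Gamma_{nm}=(2\pi)^{-d}\int_{[-\pi,\pi]^d}e^{i\langle t_n-t_m,\xi\rangle}\,d\xi=\prod_{j=1}^d\mathrm{sinc}\,\pi(t_{n,j}-t_{m,j})$, which is the asserted entry of $P_lB^{-1}P_l$ (in particular its diagonal entries are $1$).

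Next I would prove convergence of the finite sections. Since $(f_k)_k$ is a Riesz basis, $\Gamma$ is self-adjoint with $AI\le\Gamma\le BI$ for its Riesz bounds $0<A\le B$; hence for each $l$ the compression $P_l\Gamma P_l$, viewed on $\mathrm{span}\{h_1,\dots,h_l\}$, satisfies $P_l\Gamma P_l\ge AI$, so it is invertible there with inverse of norm $\le 1/A$ — this is what makes $(P_lB^{-1}P_l)^{-1}$ meaningful. For fixed $x\in\ell_2$, putting $y=\Gamma^{-1}x$ and $x_l=(P_l\Gamma P_l)^{-1}P_lx\in\mathrm{ran}(P_l)$, one has $(P_l\Gamma P_l)(x_l-P_ly)=P_l\Gamma(I-P_l)y$, whence $\Arrowvert x_l-P_ly\Arrowvert\le A^{-1}\Arrowvert\Gamma\Arrowvert\,\Arrowvert(I-P_l)y\Arrowvert\to0$, and since $P_ly\to y$ we get $x_l\to\Gamma^{-1}x=Bx$ in $\ell_2$. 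Applying this with $x=f_S=(f(t_k))_k$, which lies in $\ell_2$ because $(f_k)_k$ is a frame (exactly as in Step~5 of the proof of Theorem~\ref{main} with $\lambda=1$, since $\sum_k|f(t_k)|^2=(2\pi)^d\sum_k|\langle\mathcal{F}^{-1}(f),f_k\rangle|^2<\infty$), yields $c^{(l)}\to Bf_S$ in $\ell_2$.

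Finally the pieces are assembled. Since $\mathrm{SINC}(\pi(t-t_k))=\mathcal{F}\big[(2\pi)^{-d/2}f_k\big](t)$ with $f_k$ read as its restriction to $[-\pi,\pi]^d$ extended by $0$, and $(f_k)_k$ is a Bessel sequence, the synthesis map $T\colon\ell_2\to PW_{[-\pi,\pi]^d}$, $T(c)=\sum_k c_k\,\mathrm{SINC}(\pi(\cdot-t_k))$, equals $(2\pi)^{-d/2}\mathcal{F}$ composed with the bounded synthesis operator of $(f_k)_k$, hence is bounded; therefore
\begin{equation}
\sum_{k=1}^l c^{(l)}_k\,\mathrm{SINC}(\pi(\cdot-t_k))=T\big(c^{(l)}\big)\longrightarrow T(Bf_S)=f\nonumber
\end{equation}
in $L_2(\mathbb{R}^d)$, and uniformly since $L_2$-convergence in $PW_{[-\pi,\pi]^d}$ is uniform. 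The oversampled statement of Theorem~\ref{main2} follows by the same argument with $\mathrm{SINC}(\pi(\cdot))$ replaced by $\lambda^{-d}g(\cdot/\lambda)$ and $f_S$ by $f_{S/\lambda}$, paralleling the passage from Theorem~\ref{hmm} to Theorem~\ref{main}. The step I expect to carry the real weight is the identification $B^{-1}=\Gamma$ together with the finite-section convergence; the hypothesis $\Arrowvert n_k-t_k\Arrowvert_\infty\to0$ is inherited from Theorem~\ref{main2} but does not otherwise intervene in this argument.
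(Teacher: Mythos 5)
Your proof is correct, but the mechanism you use for the convergence of the finite sections is genuinely different from the paper's. The paper proves Theorem~\ref{main2} in detail and declares Theorem~\ref{main3} analogous; its argument writes $B^{-1}=(I-T^*)(I-T)=I+\Delta$ and invokes Corollary~\ref{truncate} --- hence the hypothesis $\Arrowvert n_k-t_k\Arrowvert_\infty\to 0$ --- to conclude that $\Delta$ is \emph{compact}, so that $P_l\Delta P_l\to\Delta$ and therefore $B_l:=I-P_l+(P_lB^{-1}P_l)^{-1}\to B$ in \emph{operator norm}; it then splits the error into a $(B-B_l)f_S$ term plus a tail $\sum_{k>l}f(t_k)\,\mathrm{SINC}(\pi(\cdot-t_k))$ and estimates both on the Fourier side. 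You instead identify $B^{-1}$ with the Gram matrix $\Gamma$ and run the classical finite-section argument for positive definite operators: the uniform lower bound $P_l\Gamma P_l\geq AI$ on $\mathrm{ran}(P_l)$ gives $\Arrowvert(P_l\Gamma P_l)^{-1}\Arrowvert\leq A^{-1}$ and strong convergence $(P_l\Gamma P_l)^{-1}P_lx\to\Gamma^{-1}x$, which is all that is needed once the synthesis operator $c\mapsto\sum_k c_k\,\mathrm{SINC}(\pi(\cdot-t_k))$ is known to be bounded from $\ell_2$ into $PW_{[-\pi,\pi]^d}$. Your route buys two things: it dispenses with the compactness argument, and consequently with the hypothesis $\lim_k\Arrowvert n_k-t_k\Arrowvert_\infty=0$, so you in fact prove the formula for an arbitrary exponential Riesz basis; the paper's route buys the stronger operator-norm convergence $B_l\to B$, which the statement as given does not require. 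Your identification $B^{-1}=\Gamma$ and the computation of its entries as products of sinc values agree with the paper's, and your verification that $f_S\in\ell_2$ matches Step~5 of the proof of Theorem~\ref{main} with $\lambda=1$. (Note the displayed formula in Theorem~\ref{main3} omits the factor $\pi$ inside $\mathrm{SINC}$; your version with $\mathrm{SINC}(\pi(t-t_k))$ is the one consistent with Theorem~\ref{hmm}.)
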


\noindent The following lemma forms the basis of the proof of the preceding theorems, as well as the other results in the paper.\\

\begin{lem}\label{finiteversion}
Let $(n_k)_{k \in \mathbb{N}}$ be an enumeration of $\mathbb{Z}^d$, and let $(t_k)_{k \in \mathbb{N}} \subset \mathbb{R}^d$.  Define $e_k,f_k:\mathbb{R}^d \rightarrow \mathbb{C}$ by $e_k (x) = \frac{1}{(2\pi)^{d/2}} e^{i \langle n_k , x \rangle}$ and $f_k(x) = \frac{1}{(2\pi)^{d/2}} e^{i \langle t_k , x \rangle}$. Then for any $r,s\geq 1$, and any finite sequence $(a_k)_{k=r}^s$, we have
\begin{equation}\label{norm}
\Bigg\Arrowvert \sum_{k=r}^s\Big( \frac{a_k}{(2\pi)^{d/2}}e^{i\langle (\cdot) , n_k \rangle}- \frac{a_k}{(2\pi)^{d/2}}e^{i\langle (\cdot) , t_k \rangle} \Big) \Bigg\Arrowvert_2 \leq \Big( e^{\pi d \big({\sup \atop {r \leq k \leq s}} \Arrowvert n_k-t_k \Arrowvert_\infty\big)}-1\Big) \Big(\sum_{k=r}^s |a_k|^2\Big)^{1/2}.
\end{equation}
\end{lem}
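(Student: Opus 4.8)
The plan is to reduce everything to the multinomial expansion of $e^{i\langle x,\,t_k-n_k\rangle}-1$, together with the orthonormality of the system $(e_k)_k$ in $L_2[-\pi,\pi]^d$ and the trivial pointwise bound $|x^\alpha|\le\pi^{|\alpha|}$ on the cube. Throughout, write $\delta_k=t_k-n_k$ and $\gamma=\sup_{r\le k\le s}\Arrowvert n_k-t_k\Arrowvert_\infty$, and use standard multi-index notation $\alpha!$, $x^\alpha$, $\delta_k^\alpha$ for $\alpha\in\mathbb{N}_0^d$.

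First I would record the pointwise identity obtained by writing $\langle x,t_k\rangle=\langle x,n_k\rangle+\langle x,\delta_k\rangle$, expanding $e^{i\langle x,\delta_k\rangle}$ as a power series, and applying the multinomial theorem to $\langle x,\delta_k\rangle^j$:
\[
\frac{1}{(2\pi)^{d/2}}\bigl(e^{i\langle x,t_k\rangle}-e^{i\langle x,n_k\rangle}\bigr)=e_k(x)\bigl(e^{i\langle x,\delta_k\rangle}-1\bigr)=e_k(x)\sum_{\alpha\ne 0}\frac{i^{|\alpha|}}{\alpha!}\,x^\alpha\delta_k^\alpha .
\]
Multiplying by $a_k$, summing over the finite range $r\le k\le s$, and interchanging this finite sum with the (for each $x$ absolutely convergent) $\alpha$-series gives
\[
\sum_{k=r}^s\frac{a_k}{(2\pi)^{d/2}}\bigl(e^{i\langle\cdot,n_k\rangle}-e^{i\langle\cdot,t_k\rangle}\bigr)=-\sum_{\alpha\ne 0}\frac{i^{|\alpha|}}{\alpha!}\,x^\alpha\Bigl(\sum_{k=r}^s a_k\delta_k^\alpha e_k(x)\Bigr).
\]

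Next I would estimate the $\alpha$-th term in $L_2[-\pi,\pi]^d$. On the cube $|x^\alpha|\le\pi^{|\alpha|}$ pointwise, so $\Arrowvert x^\alpha h\Arrowvert_2\le\pi^{|\alpha|}\Arrowvert h\Arrowvert_2$ for every $h\in L_2[-\pi,\pi]^d$. Since the $n_k$ are distinct (they enumerate $\mathbb{Z}^d$), $(e_k)_k$ is orthonormal, hence $\Arrowvert\sum_{k=r}^s a_k\delta_k^\alpha e_k\Arrowvert_2^2=\sum_{k=r}^s|a_k|^2|\delta_k^\alpha|^2\le\gamma^{2|\alpha|}\sum_{k=r}^s|a_k|^2$, using $|\delta_k^\alpha|\le\Arrowvert\delta_k\Arrowvert_\infty^{|\alpha|}\le\gamma^{|\alpha|}$. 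Thus the $L_2$-norm of the $\alpha$-th term is at most $\dfrac{(\pi\gamma)^{|\alpha|}}{\alpha!}\bigl(\sum_k|a_k|^2\bigr)^{1/2}$. This bound is summable over $\alpha$, which both justifies applying the triangle inequality term-by-term to the $\alpha$-series (it converges absolutely in $L_2$, and its $L_2$-sum coincides with its pointwise sum because the partial sums converge uniformly on the compact cube) and leads, after summing
\[
\sum_{\alpha\ne 0}\frac{(\pi\gamma)^{|\alpha|}}{\alpha!}=\prod_{l=1}^d\Bigl(\sum_{j\ge 0}\frac{(\pi\gamma)^j}{j!}\Bigr)-1=e^{\pi d\gamma}-1,
\]
to exactly the claimed inequality. (This is a multidimensional, power-series analogue of the estimate underlying Kadec's $1/4$-theorem; here the crude bound $|x^\alpha|\le\pi^{|\alpha|}$ is already sharp enough.) There is no genuine obstacle; the only points deserving a word of care are the interchange of the infinite $\alpha$-sum with the $L_2$-norm and the identification of the pointwise and $L_2$ limits of that sum, both of which are settled by the absolute-convergence estimate just established.
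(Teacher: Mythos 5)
Your proposal is correct and follows essentially the same route as the paper: expand $e^{i\langle x,\delta_k\rangle}-1$ as a multi-index power series, interchange the finite $k$-sum with the $\alpha$-series, bound $|x^\alpha|\le\pi^{|\alpha|}$ on the cube, use orthonormality of $(e_k)_k$ to evaluate each term's $L_2$-norm, and resum to $e^{\pi d\gamma}-1$. The only differences are cosmetic (you reach the multi-index expansion via the multinomial theorem rather than as a product of one-dimensional series, and you are somewhat more explicit about justifying the interchange of the infinite sum with the $L_2$-norm).
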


\begin{proof}
Let $\delta_k = t_k-n_k$ where $\delta_k = (\delta_{k1}, \cdots , \delta_{kd})$.  Then
\begin{eqnarray}\label{blork}
\phi_{r,s}(x):=\sum_{k=r}^s \frac{a_k}{(2\pi)^{d/2}} \big[ e^{i\langle n_k , x \rangle} - e^{i\langle t_k , x \rangle}\big]
 =  \sum_{k=r}^s \frac{a_k}{(2\pi)^{d/2}}e^{i\langle n_k , x \rangle} \big[ 1 - e^{i\langle \delta_k , x \rangle}\big],
\end{eqnarray}
Now for any $\delta_k$,
\begin{eqnarray}
1 - e^{i \langle \delta_k , x \rangle} & = & 1-e^{i\delta_{k1} x_1} \cdot \ldots \cdot e^{i\delta_{kd} x_d} =  1-\Big(\sum_{j_1 = 0}^{\infty} \frac{(i \delta_{k1} x_1)^{j_1}}{j_1 !}  \Big) \cdot \ldots \cdot \Big( \sum_{j_d = 0}^{\infty} \frac{(i \delta_{kd} x_d)^{j_d}}{j_d !} \Big)\nonumber\\
& = & 1-\sum_{(j_1,\cdots, j_d) \atop j_i \geq 0} \frac{(i \delta_{k1} x_1)^{j_1} \cdot \ldots \cdot (i \delta_{kd} x_d)^{j_d}}{j_1 ! \cdot \ldots \cdot j_d !}\nonumber\\
& = & -\sum_{(j_1,\cdots, j_d) \in J} i^{j_1 +\ldots + j_d}\frac{(\delta_{k1} x_1)^{j_1} \cdot \ldots \cdot (\delta_{kd} x_d)^{j_d}}{j_1 ! \cdot \ldots \cdot j_d !},\nonumber
\end{eqnarray}
where $J = \{ (j_1,\cdots, j_d) \in \mathbb{Z}^d |j_i \geq 0, (j_1,\cdots, j_d) \neq 0\}$.  Then equation~(\ref{blork}) becomes
\begin{eqnarray}
\phi_{r,s}(x) & = & -\sum_{k=r}^s \frac{a_k}{(2\pi)^{d/2}}e^{i\langle n_k , x \rangle}\Big[ \sum_{(j_1,\cdots, j_d) \in J} i^{j_1 +\ldots + j_d}\frac{(\delta_{k1} x_1)^{j_1} \cdot \ldots \cdot (\delta_{kd} x_d)^{j_d}}{j_1 ! \cdot \ldots \cdot j_d !} \Big]\nonumber\\
& = & -\sum_{(j_1,\cdots, j_d) \in J} \frac{x_1^{j_1} \cdot \ldots \cdot x_d^{j_d}}{j_1 ! \cdot \ldots \cdot j_d !} i^{j_1 +\ldots + j_d} \sum_{k=r}^s\frac{a_k}{(2\pi)^{d/2}}\delta_{k1}^{j_1}  \cdot \ldots \cdot \delta_{kd}^{j_d} e^{i \langle n_k , x \rangle},\nonumber
\end{eqnarray}
so
\begin{eqnarray}
|\phi_{r,s}(x)| \leq \sum_{(j_1,\cdots, j_d) \in J} \frac{\pi^{j_1 + \ldots +j_d}}{j_1 ! \cdot \ldots \cdot j_d !} \Big| \sum_{k=r}^s a_k \delta_{k1}^{j_1}  \cdot \ldots \cdot \delta_{kd}^{j_d} \frac{e^{i \langle n_k , x \rangle}}{(2\pi)^{d/2}} \Big|.\nonumber
\end{eqnarray}
For brevity denote the outer summand above by $h_{j_1 , \ldots ,j_d}(t)$.  Then
\begin{eqnarray}
\bigg(\int_{[-\pi,\pi]^d} |\phi_{r,s}(x)|^2 dt \bigg)^\frac{1}{2} & \leq & \bigg( \int_{[-\pi,\pi]^d} \Big| \sum_{(j_1,\cdots, j_d) \in J } h_{j_1 , \ldots ,j_d}(x) \Big|^2 dx \bigg)^\frac{1}{2} \nonumber\\
& \leq & \sum_{(j_1,\cdots, j_d) \in J } \bigg( \int_{[-\pi,\pi]^d} \Big| h_{j_1 , \ldots ,j_d}(x) \Big|^2 dx \bigg)^\frac{1}{2},\nonumber
\end{eqnarray}
so that
\begin{eqnarray}
\Arrowvert \phi_{r,s} \Arrowvert_2 & \leq & \sum_{(j_1,\cdots, j_d) \in J } \frac{\pi^{j_1 + \cdot \ldots \cdot +j_d}}{j_1 ! \cdot \ldots \cdot j_d !} \Big( \int_{[-\pi,\pi]^d} \bigg| \sum_{k=r}^s a_k \delta_{k1}^{j_1}  \cdot \ldots \cdot \delta_{kd}^{j_d} \frac{e^{i \langle n_k , x \rangle}}{(2\pi)^{d/2}} \bigg|^2 dx \Big)^\frac{1}{2}\nonumber\\
& = & \sum_{(j_1,\cdots, j_d) \in J }  \frac{\pi^{j_1 + \cdot \ldots \cdot +j_d}}{j_1 ! \cdot \ldots \cdot j_d !} \Big( \sum_{k=r}^s |a_k|^2 |\delta_{k1}^{j_1}|^2 \cdot \ldots \cdot |\delta_{kd}^{j_d}|^2 \Big)^\frac{1}{2}\nonumber\\
& \leq & \sum_{(j_1,\cdots, j_d) \in J }  \frac{\pi^{j_1 + \cdot \ldots \cdot +j_d}}{j_1 ! \cdot \ldots \cdot j_d !} \Bigg( \sum_{k=r}^s |a_k|^2 \Big({\sup \atop {r \leq k \leq s}} \Arrowvert n_k-t_k \Arrowvert_\infty\Big)^{2(j_1 + \ldots + j_d)} \Bigg)^\frac{1}{2}\nonumber\\
& = & \sum_{(j_1,\cdots, j_d) \in J }  \frac{\Big(\pi{\sup \atop {r \leq k \leq s}} \Arrowvert n_k-t_k \Arrowvert_\infty\Big)^{j_1 + \cdot \ldots \cdot + j_d}}{j_1 ! \cdot \ldots \cdot j_d !} \Big( \sum_{k=r}^s |a_k|^2 \Big)^\frac{1}{2}\nonumber\\
& = & \bigg[ \prod_{l=1}^d\bigg( \sum_{j_\ell = 0}^{\infty} \frac{\big(\pi{\sup \atop {r \leq k \leq s}} \Arrowvert n_k-t_k \Arrowvert_\infty\big)^{j_\ell}}{j_\ell !} \bigg)-1 \bigg]\Big(\sum_{k=r}^s |a_k|^2\Big)^\frac{1}{2}\nonumber\\
& = & \Big( e^{\pi d \big({\sup \atop {r \leq k \leq s}} \Arrowvert n_k-t_k \Arrowvert_\infty\big)}-1\Big) \Big(\sum_{k=r}^s |a_k|^2\Big)^{\frac{1}{2}}.\nonumber
\end{eqnarray}
\end{proof}

\begin{cor}\label{estimate}
Let $(n_k)_{k \in \mathbb{N}}$ be an enumeration of $\mathbb{Z}^d$, and let $(t_k)_{k \in \mathbb{N}} \subset \mathbb{R}^d$ such that $$\sup_{k \in \mathbb{N}} \Arrowvert n_k - t_k \Arrowvert_\infty = L < \infty.$$  Define $e_k,f_k:\mathbb{R}^d \rightarrow \mathbb{C}$ by $e_k (x) = \frac{1}{(2\pi)^{d/2}} e^{i \langle n_k , x \rangle}$ and $\frac{1}{(2\pi)^{d/2}} e^{i \langle t_k , x \rangle}$.  Then the map $T:L_2[-\pi,\pi]^d \rightarrow L_2[-\pi,\pi]^d$, defined by $T e_n = e_n - f_n$, satisfies the following estimate:
\begin{equation}\label{norm2}
\Arrowvert T \Arrowvert \leq e^{\pi L d} -1.
\end{equation}
\end{cor}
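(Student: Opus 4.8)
The plan is to deduce this immediately from Lemma~\ref{finiteversion} by a completeness argument. Since $(e_k)_{k\in\mathbb{N}}$ is an orthonormal basis for $L_2[-\pi,\pi]^d$, every $h \in L_2[-\pi,\pi]^d$ has a unique expansion $h = \sum_k a_k e_k$ with $\sum_k |a_k|^2 = \Arrowvert h\Arrowvert^2$, so controlling $T$ amounts to controlling $\sum_k a_k(e_k - f_k)$ for arbitrary $(a_k)\in\ell_2(\mathbb{N})$.

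First I would check that the series $\sum_k a_k(e_k - f_k)$ converges in $L_2[-\pi,\pi]^d$. Applying~(\ref{norm}) to the finite block $(a_k)_{k=m+1}^n$, and using that $\sup_{m+1 \leq k \leq n}\Arrowvert n_k - t_k \Arrowvert_\infty \leq L$ together with the monotonicity of $x \mapsto e^{\pi d x}-1$, one gets
\[
\Bigg\Arrowvert \sum_{k=m+1}^n a_k(e_k - f_k) \Bigg\Arrowvert_2 \leq (e^{\pi L d}-1)\Big(\sum_{k=m+1}^n |a_k|^2\Big)^{1/2}.
\]
Because $(a_k)\in\ell_2(\mathbb{N})$, the right-hand side tends to $0$ as $m,n\to\infty$, so the partial sums form a Cauchy sequence and the series converges by completeness of $L_2[-\pi,\pi]^d$. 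This shows $T$ is well defined on all of $L_2[-\pi,\pi]^d$, and linearity is clear from the definition $Te_n = e_n - f_n$.

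Then the norm estimate follows by taking $r=1$, $s=n$ in~(\ref{norm}), which gives
\[
\Bigg\Arrowvert \sum_{k=1}^n a_k(e_k - f_k) \Bigg\Arrowvert_2 \leq (e^{\pi L d}-1)\Big(\sum_{k=1}^n |a_k|^2\Big)^{1/2} \leq (e^{\pi L d}-1)\Arrowvert h\Arrowvert,
\]
and letting $n\to\infty$; since the norm is continuous this yields $\Arrowvert Th\Arrowvert_2 \leq (e^{\pi L d}-1)\Arrowvert h\Arrowvert$, i.e.~$\Arrowvert T\Arrowvert \leq e^{\pi L d}-1$. There is no genuinely hard step here: the one point requiring care is that the bound in Lemma~\ref{finiteversion} becomes \emph{uniform} in $r,s$ once one replaces $\sup_{r\leq k\leq s}\Arrowvert n_k-t_k\Arrowvert_\infty$ by the global constant $L$, and this uniformity is precisely what allows the finite-sum estimate to survive the passage to the infinite series.
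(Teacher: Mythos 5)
Your proposal is correct and is essentially the paper's argument: the paper likewise deduces the corollary from Lemma~\ref{finiteversion} by noting that the finite-sum bound (with $\sup_{r\le k\le s}\Arrowvert n_k-t_k\Arrowvert_\infty$ replaced by the global constant $L$) makes $T$ bounded on the dense subspace of finite linear combinations of the $e_k$, and then extends by density/completeness. Your version just spells out the Cauchy-sequence details that the paper leaves implicit.
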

\begin{proof}
Lemma (\ref{finiteversion}) shows that $T$ is uniformly continuous on a dense subset of the ball in $L_2(E)$, so $T$ is bounded on $L_2[-\pi,\pi]^d$. The inequality~(\ref{norm2}) follows immediately. 
\end{proof}

\begin{cor}\label{truncate}
 Let $(n_k)_{k \in \mathbb{N}}$, $(t_k)_{k \in \mathbb{N}} \subset \mathbb{R}^d$, and let $e_k$, $f_k$ and $T$ be defined as in Corollary \ref{estimate}.  For each $l \in \mathbb{N}$, define $T_l$ by $T_l e_k = e_k - f_k$ for $1 \leq k \leq l$, and $T_l e_k = 0$ for $ l < k$.  If $\lim_{k \rightarrow\infty} \Arrowvert n_k - t_k \Arrowvert_\infty = 0$, then $ \lim_{l \rightarrow \infty} T_l = T\nonumber$ in the operator norm.  In particular, $T$ is a compact operator.
\end{cor}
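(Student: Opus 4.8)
The plan is to bound the operator norm $\Arrowvert T - T_l \Arrowvert$ directly by means of Lemma~\ref{finiteversion}, applied over the tail index range $[l+1,\infty)$, and then to deduce compactness of $T$ from the fact that a norm limit of finite-rank operators is compact. First I would record that each $T_l$ is a well-defined bounded operator on $L_2[-\pi,\pi]^d$: its range is contained in the finite-dimensional space $\mathrm{span}\{e_1-f_1,\dots,e_l-f_l\}$, so $T_l$ has rank at most $l$ and is automatically bounded. Next, note that $(T-T_l)e_k = 0$ for $1\leq k\leq l$ and $(T-T_l)e_k = e_k-f_k$ for $k>l$; writing an arbitrary $g\in L_2[-\pi,\pi]^d$ in the orthonormal basis $(e_k)_k$ as $g=\sum_k a_k e_k$ with $\sum_k|a_k|^2 = \Arrowvert g\Arrowvert^2$, linearity and boundedness give $(T-T_l)g = \sum_{k>l} a_k(e_k-f_k)$.

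For the key estimate, set $\sigma_l := \sup_{k>l}\Arrowvert n_k-t_k\Arrowvert_\infty$, which is finite since $\Arrowvert n_k-t_k\Arrowvert_\infty\to 0$. Applying Lemma~\ref{finiteversion} with $r=l+1$ and $s>l$ arbitrary to the finite partial sums $\sum_{k=l+1}^s a_k(e_k-f_k)$ yields
\begin{equation}
\Big\Arrowvert \sum_{k=l+1}^s a_k(e_k-f_k) \Big\Arrowvert_2 \;\leq\; \big( e^{\pi d\,\sigma_l}-1 \big)\Big(\sum_{k=l+1}^s |a_k|^2\Big)^{1/2} \;\leq\; \big( e^{\pi d\,\sigma_l}-1 \big)\Arrowvert g\Arrowvert.\nonumber
\end{equation}
Since the partial sums are therefore Cauchy in $L_2$ and uniformly bounded as above, the series $\sum_{k>l}a_k(e_k-f_k)$ converges and $\Arrowvert (T-T_l)g\Arrowvert \leq (e^{\pi d\,\sigma_l}-1)\Arrowvert g\Arrowvert$; taking the supremum over $\Arrowvert g\Arrowvert\leq 1$ gives $\Arrowvert T-T_l\Arrowvert \leq e^{\pi d\,\sigma_l}-1$.

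Finally, because $\lim_{k\to\infty}\Arrowvert n_k-t_k\Arrowvert_\infty = 0$ we have $\sigma_l\to 0$ as $l\to\infty$, so $\Arrowvert T-T_l\Arrowvert\to 0$, i.e.\ $T_l\to T$ in operator norm. Each $T_l$ is finite-rank, and the compact operators form a norm-closed subspace of $B(L_2[-\pi,\pi]^d)$ containing all finite-rank operators, so $T$ is compact. The only point requiring a moment's care is the passage from the finite-sum bound of Lemma~\ref{finiteversion} to an estimate valid on all of $L_2[-\pi,\pi]^d$ — namely the convergence of $\sum_{k>l}a_k(e_k-f_k)$ and the uniformity of the constant in $g$ — but this is immediate from the displayed inequality, so there is no substantive obstacle: the corollary is essentially Lemma~\ref{finiteversion} re-read over the tail index set, together with the standard closure property of the compact operators.
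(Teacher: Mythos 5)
Your proof is correct and follows essentially the same route as the paper: both express $(T-T_l)g$ as the tail sum $\sum_{k>l}a_k(e_k-f_k)$, apply Lemma~\ref{finiteversion} over the tail index range to get $\Arrowvert T-T_l\Arrowvert\leq e^{\pi d\,\sigma_l}-1\to 0$, and conclude compactness since each $T_l$ has finite rank. The extra care you take with convergence of the partial sums is a welcome but minor addition.
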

\begin{proof}
As
\begin{eqnarray}
(T-T_l) \big( \sum_{k=1}^\infty a_k e_k \big) & = & \sum_{k=1}^\infty a_k (e_k -f_k) - \sum_{k=1}^l a_k (e_k -f_k) \nonumber\\ 
& = & \sum_{k=l+1}^\infty a_k (e_k -f_k) = T \big( \sum_{k=l+1}^\infty a_k e_k \big),\nonumber
\end{eqnarray}
the estimate derived in lemma (\ref{finiteversion}) yields
\begin{eqnarray}
\big\Arrowvert (T-T_l)\big( \sum_{k=1}^\infty a_k e_k \big) \big\Arrowvert_2 & = & \big\Arrowvert T \big( \sum_{k=l+1}^\infty a_k e_k \big) \big\Arrowvert_2 \leq  \big( e^{\pi d {\sup \atop k \geq l+1} \Arrowvert \delta_k \Arrowvert_\infty}-1 \big) \big\Arrowvert \sum_{k=1}^\infty a_k e_k \big\Arrowvert_2,\nonumber
\end{eqnarray}
so $\big\Arrowvert (T-T_l)\big\Arrowvert_2 \rightarrow 0$ as $l \rightarrow \infty$. As $T_l$ has finite rank, we deduce that $T$ is compact. \end{proof}

\noindent We are ready for the proof of Theorem~\ref{main2}.\\

\begin{proof}


\noindent Step 1:  $B$ is a compact perturbation of the identity map, namely
\begin{equation}\label{comppert}
B =  I+\lim_{l \rightarrow \infty} (-P_l +(P_l B^{-1} P_l)^{-1}).
\end{equation}

\noindent Since $(f_k)_{k \in \mathbb{N}}$ is a Riesz basis for $L_2[-\pi,\pi]^d$, $L^* = (I-T)$ is an onto isomorphism where $T e_k = e_k -f_k$; so $B$ simplifies to $(I-T)^{-1} (I-T^*)^{-1}$.  We examine 
\begin{eqnarray}
B^{-1} = (I-T^*)(I-T) = I+(T^* T-T-T^*) := I+\Delta,\nonumber
\end{eqnarray}
where $\Delta$ is a compact operator.  If an operator $\Delta:H \rightarrow H$ is compact then so is $\Delta^*$, hence $P_l \Delta P_l \rightarrow \Delta$ in the operator norm because
\begin{eqnarray}
\Arrowvert P_l \Delta P_l -\Delta  \Arrowvert & \leq & \Arrowvert P_l \Delta P_l -P_l \Delta \Arrowvert + \Arrowvert P_l \Delta -\Delta \Arrowvert \leq  \Arrowvert \Delta P_l -\Delta \Arrowvert + \Arrowvert P_l \Delta -\Delta \Arrowvert\nonumber\\ & = & \Arrowvert  P_l \Delta^* -\Delta^* \Arrowvert + \Arrowvert P_l \Delta -\Delta \Arrowvert \rightarrow 0.\nonumber
\end{eqnarray}
We have
\begin{eqnarray}
B^{-1} & = & \lim_{l \rightarrow \infty} (I+P_l \Delta P_l) =  \lim_{l \rightarrow \infty} (I+P_l (B^{-1}-I) P_l) =  \lim_{l \rightarrow \infty} (I-P_l +P_l B^{-1} P_l).\nonumber
\end{eqnarray}
Now $(P_l B^{-1} P_l)$ restricted to the first $l$ rows and columns is the Grammian matrix for the set $(f_1,\cdots,f_l)$ which can be shown (in a straightforward manner) to be linearly independent. We conclude that $P_l B^{-1} P_l$ is invertible as an $l \times l$ matrix.  By $(P_l B^{-1} P_l)^{-1}$ we mean the inverse as an $l \times l$ matrix and zeroes elsewhere.  Observing that the ranges of $P_l B^{-1} P_l$ and $(P_l B^{-1} P_l)^{-1}$ are in the kernel of $I-P_l$, and that the range of $I-P_l$ is in the kernels of $P_l B^{-1} P_l$ and $(P_l B^{-1} P_l)^{-1}$, we easily compute
\begin{equation}
(I-P_l +(P_l B^{-1} P_l)^{-1})^{-1} = I-P_l +P_l B^{-1} P_l,\nonumber
\end{equation}
so that 
\begin{equation}
B^{-1} = \lim_{l \rightarrow \infty} (I-P_l +(P_l B^{-1} P_l)^{-1})^{-1},\nonumber
\end{equation}
implying
\begin{eqnarray}
B & = & \lim_{l \rightarrow \infty} (I-P_l +(P_l B^{-1} P_l)^{-1}) := \lim_{l \rightarrow \infty} B_l =  I+\lim_{l \rightarrow \infty} (-P_l +(P_l B^{-1} P_l)^{-1}).\nonumber
\end{eqnarray}

\noindent Step 2: We verifiy equation~(\ref{truncatedequation}) and its convergence properties. Recalling equation~(\ref{convenient}), we have
\begin{eqnarray}
f(t) & - & \frac{1}{\lambda^d} \sum_{k=1}^{\infty}[(I-P_l +(P_l B^{-1} P_l)^{-1}) f_{S/\lambda}]_k g\big( t- \frac{t_k}{\lambda} \big) = \frac{1}{\lambda^d} \sum_{k=1}^{\infty}[(B-B_l) f_{S/\lambda}]_k g\big( t- \frac{t_k}{\lambda} \big)\nonumber
\end{eqnarray}
implying
\begin{eqnarray}
f(t) & - & \frac{1}{\lambda^d} \sum_{k=1}^{l}[(P_l B^{-1} P_l)^{-1} f_{S/\lambda}]_k g\big( t- \frac{t_k}{\lambda} \big) \nonumber\\ & = & \frac{1}{\lambda^d} \sum_{k=1}^{\infty}[(B-B_l) f_{S/\lambda}]_k g\big( t- \frac{t_k}{\lambda} \big) + \frac{1}{\lambda^d}\sum_{k=l+1}^{\infty} f\big( \frac{t_k}{\lambda} \big) g\big( t- \frac{t_k}{\lambda} \big).\nonumber
\end{eqnarray}
Therefore,
\begin{eqnarray}
& \Big\Arrowvert & f(\cdot) - \frac{1}{\lambda^d} \sum_{k=1}^{l}[(P_l B^{-1} P_l)^{-1} f_{S/\lambda}]_k g\big( \cdot- \frac{t_k}{\lambda} \big) \Big\Arrowvert_2 = \nonumber\\
& = & \Big\Arrowvert \frac{1}{\lambda^d} \sum_{k=1}^{\infty}[(B-B_l) f_{S/\lambda}]_k g\big( \cdot- \frac{t_k}{\lambda} \big) + \frac{1}{\lambda^d} \sum_{k=l+1}^{\infty} f\big( \frac{t_k}{\lambda} \big) g\big( \cdot- \frac{t_k}{\lambda} \big) \Big\Arrowvert_{[-\lambda\pi,\lambda\pi]^d} \nonumber\\
& = & \frac{1}{\lambda^d} \Big\Arrowvert \mathcal{F}^{-1}(g)(\cdot) \Big( \sum_{k=1}^{\infty}[(B-B_l) f_{S/\lambda}]_k e^{i\langle \cdot , \frac{t_k}{\lambda} \rangle} + \sum_{k=l+1}^{\infty} f\big( \frac{t_k}{\lambda} \big) e^{i\langle \cdot , \frac{t_k}{\lambda} \rangle} \Big) \Big\Arrowvert_{[-\lambda\pi,\lambda\pi]^d}\nonumber
\end{eqnarray}
after taking the inverse Fourier transform.  Now
\begin{eqnarray}
& \Big\Arrowvert & f(\cdot) - \frac{1}{\lambda^d} \sum_{k=1}^{l}[(P_l B^{-1} P_l)^{-1} f_{S/\lambda}]_k g\big( \cdot- \frac{t_k}{\lambda} \big) \Big\Arrowvert_2 \nonumber\\
& \leq & \frac{1}{\lambda^d} \Big\Arrowvert \sum_{k=1}^{\infty}[(B-B_l) f_{S/\lambda}]_k e^{i\langle \cdot , \frac{t_k}{\lambda} \rangle} \Big\Arrowvert_{[-\lambda\pi,\lambda\pi]^d} + \frac{1}{\lambda^d} \Big\Arrowvert \sum_{k=l+1}^{\infty} f\big( \frac{t_k}{\lambda} \big) e^{i\langle \cdot , \frac{t_k}{\lambda} \rangle} \Big\Arrowvert_{[-\lambda\pi,\lambda\pi]^d}\nonumber\\
& \leq & \frac{M}{\lambda^d} \Big\Arrowvert (B-B_l)f_{S/\lambda} \Big\Arrowvert_{\ell_2(\mathbb{N})} + \frac{M}{\lambda^d} \Big( \sum_{k=l+1}^{\infty} \big| f\big( \frac{t_k}{\lambda} \big)  \big|^2 \Big)^\frac{1}{2},\nonumber
\end{eqnarray}
since  $\big(f_k\big(\frac{\cdot}{\lambda}\big)\big)_k$ is a Riesz basis for $L_2[-\lambda\pi, \lambda\pi]^d$.  Since $B_l \rightarrow B$ as $l \rightarrow \infty$ and $\big( f\big(\frac{t_k}{\lambda}\big) \big)_k \in \ell_2(\mathbb{N})$, the last two terms in the inequality above tend to zero, which proves the required result.\\

\noindent Finally, to compute $(P_l B^{-1} P_l)_{nm}$,  recall that $B^{-1} = (I-T^*)(I-T)$.  Proceeding in a manner similar to the proof of equation~(\ref{matrixform}), we obtain
\begin{eqnarray}
B^{-1}_{mn} & = & \langle L L^* e_n , e_m \rangle = \langle L^* e_n ,L^* e_m \rangle = \langle f_n , f_m \rangle\nonumber\\
& = & \mathrm{sinc} \pi(t_{n,1}-t_{m,1}) \cdot \ldots \cdot \mathrm{sinc} \pi(t_{n,d}-t_{m,d}).\nonumber
\end{eqnarray}
The entries of $P_l B^{-1} P_l$ agree with those of $B^{-1}$ when $1 \leq n,m \leq l$.
\end{proof}

\noindent One generalization of Kadec's $1/4$ theorem given by Pak and Shin in \cite{PS} (which is actually a special case of Avdonin's theorem) is:
\begin{thm}\label{pakshin}
Let $(t_n)_{n \in \mathbb{Z}} \subset \mathbb{R}$ be a sequence of distinct points such that $$\limsup_{|n| \rightarrow \infty} |n-t_n| = L < \frac{1}{4}.$$  Then the sequence of functions $(f_k)_{k \in \mathbb{Z}}$, defined by $f_k (x) = \frac{1}{\sqrt{2 \pi}}e^{i t_k x}$, is a Riesz basis for $L_2[-\pi,\pi]$.
\end{thm}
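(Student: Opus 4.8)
The plan is to reduce the statement to the classical Kadec $1/4$ theorem (the version in which $\sup_n |n - t_n| < 1/4$) together with a stability argument showing that a finite perturbation of an exponential Riesz basis is again a Riesz basis. First I would fix $L'$ with $L < L' < 1/4$; the $\limsup$ hypothesis guarantees that $F := \{ n \in \mathbb{Z} : |n - t_n| \ge L' \}$ is finite. Working one index at a time, I would then choose, for each $n \in F$, a point $\tau_n \in (n - L', n + L')$ so that the sequence $(\tau_n)_{n \in \mathbb{Z}}$ defined by $\tau_n := t_n$ for $n \notin F$ consists of pairwise distinct points and moreover $\tau_n \ne t_m$ for all $m \in \mathbb{Z}$; this is possible since at each step only a countable set must be avoided inside an open interval. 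Now $\sup_{n \in \mathbb{Z}} |n - \tau_n| \le L' < 1/4$, so by the classical Kadec theorem (see \cite{Y}) the functions $\tilde f_k(x) := \frac{1}{\sqrt{2\pi}} e^{i \tau_k x}$ form a Riesz basis for $L_2[-\pi,\pi]$.

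Next I would pass to operators. Let $e_k(x) = \frac{1}{\sqrt{2\pi}} e^{ikx}$ be the standard orthonormal basis, let $\tilde V : L_2[-\pi,\pi] \to L_2[-\pi,\pi]$ be the synthesis operator with $\tilde V e_k = \tilde f_k$, which is invertible because $(\tilde f_k)$ is a Riesz basis, and let $D$ be the operator with $D e_k = \tilde f_k - f_k$; since $\tilde f_k = f_k$ for $k \notin F$, the operator $D$ has finite rank. Then $V := \tilde V - D$ satisfies $V e_k = f_k$, and $V = \tilde V (I - \tilde V^{-1} D)$ with $\tilde V^{-1} D$ of finite rank. By the Fredholm alternative, $I - \tilde V^{-1} D$ is an isomorphism if and only if it is injective; hence $(f_k)_{k \in \mathbb{Z}}$ is a Riesz basis for $L_2[-\pi,\pi]$ if and only if $V$ is injective, i.e. if and only if the only $(c_k) \in \ell_2(\mathbb{Z})$ with $\sum_k c_k e^{i t_k x} = 0$ in $L_2[-\pi,\pi]$ is $(c_k) = 0$.

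It remains to establish that injectivity, which I expect to be the main obstacle — the reduction above is purely abstract, but completeness of the finitely relocated exponential system genuinely uses the arithmetic of the nodes. Suppose $\sum_k c_k e^{i t_k x} = 0$ with $(c_k) \in \ell_2$. Since $t_k = \tau_k$ off $F$, the vector $v := \sum_{m \in F} c_m e^{i t_m x}$ lies in $M := \overline{\mathrm{span}}\{ e^{i \tau_k x} : k \notin F \}$, and because $(\tilde f_k)$ is a Riesz basis, $M^\perp$ is the span of the biorthogonal functionals $\tilde f_k^*$, $k \in F$. I would then invoke the generating function $G$ of the exact system $(\tilde f_k)$ — an entire function of exponential type $\pi$ (of sine type) whose zeros are precisely the $\tau_k$, all simple — for which $\langle e^{izx}, \tilde f_k^* \rangle$ is a nonzero constant times $G(z)/(z - \tau_k)$. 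Writing out $\langle v, \tilde f_k^* \rangle = 0$ for $k \in F$ gives a homogeneous linear system for $(c_m)_{m \in F}$ whose coefficient matrix equals, up to invertible diagonal factors, the Cauchy matrix $\big( (t_m - \tau_n)^{-1} \big)_{n,m \in F}$ times $\mathrm{diag}\big( G(t_m) \big)_{m \in F}$. The Cauchy matrix is invertible because the $t_m$ ($m \in F$) are distinct, the $\tau_n$ are distinct, and $t_m \ne \tau_n$ by our choice; and the diagonal factor is invertible because $t_m$ is not a zero of $G$, using that $t_m \ne \tau_k$ for every $k$ (for $k \notin F$ by distinctness of the $t_k$, for $k \in F$ by construction). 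Hence $c_m = 0$ for all $m \in F$; then $\sum_{k \notin F} c_k e^{i \tau_k x} = 0$, and the Riesz basis property of $(\tilde f_k)$ forces the remaining $c_k$ to vanish. By the previous paragraph, $(f_k)_{k \in \mathbb{Z}}$ is a Riesz basis. Alternatively, this last step may simply be quoted from the classical theory of finite perturbations of exponential Riesz bases.
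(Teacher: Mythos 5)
The paper does not actually prove Theorem~\ref{pakshin}: it is quoted from Pak and Shin \cite{PS} and noted to be a special case of Avdonin's theorem, so there is no internal argument to compare yours against. Your proposal is, as far as I can check, a correct and essentially self-contained proof, and it is worth recording what it rests on. The reduction is sound: choosing $L<L'<1/4$ makes $F=\{n:|n-t_n|\ge L'\}$ finite, the relocated sequence $(\tau_n)$ satisfies the hypotheses of the classical Kadec theorem, and since $V=\tilde V(I-\tilde V^{-1}D)$ with $\tilde V^{-1}D$ of finite rank, the Fredholm alternative correctly reduces the Riesz basis property of $(f_k)$ to the absence of a nontrivial $\ell_2$-relation $\sum_k c_k f_k=0$. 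The injectivity step is where the content lies, and your treatment of it is right: $v=\sum_{m\in F}c_m f_m$ lands in $\overline{\mathrm{span}}\{\tilde f_k:k\notin F\}$, hence is annihilated by $\tilde f_n^*$ for $n\in F$; the formula $\langle e^{izx},\tilde f_n^*\rangle=\sqrt{2\pi}\,H(z)/\bigl((z-\tau_n)H'(\tau_n)\bigr)$ is exactly the content of Theorem~\ref{levinsonlike} of this paper (applicable to $(\tau_n)$ because Kadec's estimate gives $\Arrowvert I-L\Arrowvert\le 1-\cos\pi L'+\sin\pi L'<1$ for $L'<1/4$, and $\tau_n\ne 0$ for $n\ne 0$); and the resulting system for $(c_m)_{m\in F}$ is $\mathrm{diag}\bigl(1/H'(\tau_n)\bigr)\cdot\bigl((t_m-\tau_n)^{-1}\bigr)\cdot\mathrm{diag}\bigl(H(t_m)\bigr)$, invertible by the Cauchy determinant since the $t_m$ are distinct, the $\tau_n$ are distinct, and your construction forces $t_m\notin\{\tau_k\}_k$ so that $H(t_m)\ne 0$. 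Two small points deserve explicit mention in a write-up: that the zeros of the canonical product $H$ are precisely the $\tau_k$ and are simple (so the diagonal factors really are invertible), and that $V$ being bounded is needed before interpreting its injectivity as the nonexistence of $\ell_2$-relations; both are immediate here. Compared with quoting Avdonin, your argument buys a proof at the level of the tools already developed in Sections 5--7 of this paper (Kadec plus the Levinson biorthogonal formula), at the cost of handling the finitely many exceptional nodes by hand via the Cauchy matrix rather than invoking a general perturbation theorem.
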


\noindent Theorem~\ref{pakshin} shows that in the univariate case of Theorem \ref{main2}, the restriction that $(f_k)_{k \in \mathbb{N}}$ is a Riesz basis for $L_2[-\pi,\pi]$ can be dropped.  The following example shows that the multivariate case is very different\\

\noindent Let $(e_n)_n$ be an orthonormal basis for a Hilbert space $H$.  Let $f_1 \in H$ with $\Arrowvert f_1 \Arrowvert = 1$, then $(f_1,e_2, e_3, \cdots)$ is a Riesz basis for $H$ iff $\langle f_1 , e_1 \rangle \neq 0$.  Verifying that the map $T$, given by $e_k \mapsto e_k$ for $k>1$ and $e_1 \mapsto f_1$, is a continuous bijection is routine, so $T$ is an isomorphism via the Open Mapping Theorem.  In the language of Theorem~\ref{main2}, $(f_1,e_2, e_3, \cdots)$ is a Riesz basis for $L_2 [-\pi, \pi]$ iff $$ 0 \neq \mathrm{sinc}(\pi t_{1,1})\cdot \ldots \cdot \mathrm{sinc}(\pi t_{1,d}),$$ that is, iff $t_1 \in (\mathbb{R} \setminus \{ \pm 1, \pm 2, \cdots \})^d$.\\



\section{Generalizations of Kadec's 1/4 Theorem}\label{S:5}

\noindent Corollary \ref{estimate} yields the following generalization of Kadec's Theorem in $d$ dimensions.

\begin{cor}\label{kadeclike}
Let $(n_k)_{k \in \mathbb{N}}$ be an enumeration of $\mathbb{Z}^d$, and let $(t_k)_{k \in \mathbb{N}} \subset \mathbb{R}^d$ such that
\begin{equation}\label{ineq}
\sup_{k \in \mathbb{N}} \Arrowvert n_k - t_k \Arrowvert_\infty = L < \frac{\ln(2)}{\pi d}.
\end{equation}
Then the sequence $(f_k)_{k \in \mathbb{N}}$ defined by $f_k (x) = \frac{1}{(2\pi)^{d/2}}e^{i \langle x , t_k \rangle}$ is a Riesz basis for $L_2[-\pi,\pi]^d$.
\end{cor}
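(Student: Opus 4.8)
The plan is to deduce the statement directly from Corollary~\ref{estimate} together with the standard Neumann-series argument for operators close to the identity. The hypothesis guarantees in particular that $\sup_k \Arrowvert n_k - t_k \Arrowvert_\infty = L < \infty$, so Corollary~\ref{estimate} applies and produces a bounded operator $T:L_2[-\pi,\pi]^d \rightarrow L_2[-\pi,\pi]^d$ with $T e_n = e_n - f_n$ and $\Arrowvert T \Arrowvert \leq e^{\pi L d} - 1$.

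The first step is the numerical observation that the bound on $L$ forces $\Arrowvert T \Arrowvert < 1$: since $L < \frac{\ln 2}{\pi d}$ we have $\pi L d < \ln 2$, hence $e^{\pi L d} < 2$ and therefore $\Arrowvert T \Arrowvert \leq e^{\pi L d} - 1 < 1$. The second step is to conclude that $I - T$ is invertible: because $\Arrowvert T \Arrowvert < 1$, the Neumann series $\sum_{j \geq 0} T^j$ converges in operator norm to a bounded inverse of $I - T$, so $I - T$ is an onto isomorphism of $L_2[-\pi,\pi]^d$. The third step identifies this isomorphism with the preframe-type map attached to $(f_k)$: since $(I - T)e_n = e_n - (e_n - f_n) = f_n$ and $(e_n)_n$ (the reordered multivariate Fourier exponentials $\tfrac{1}{(2\pi)^{d/2}}e^{i\langle n_k,\cdot\rangle}$) is an orthonormal basis for $L_2[-\pi,\pi]^d$, the map $L := I - T$ satisfies $L e_n = f_n$ and is an onto isomorphism. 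By the definition of a Riesz basis, $(f_k)_{k\in\mathbb{N}}$ is a Riesz basis for $L_2[-\pi,\pi]^d$.

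There is essentially no serious obstacle here: the entire content has been packaged into Corollary~\ref{estimate} (which in turn rests on Lemma~\ref{finiteversion}), and what remains is the elementary perturbation-of-the-identity fact. The only minor point worth a sentence is that $T$ is genuinely defined on all of $L_2[-\pi,\pi]^d$, not merely on finite linear combinations of the $e_n$; this is exactly what Corollary~\ref{estimate} asserts, via the uniform continuity estimate of Lemma~\ref{finiteversion} on a dense subset. One may also remark that the particular enumeration $(n_k)$ of $\mathbb{Z}^d$ is immaterial, since reordering an orthonormal basis again yields an orthonormal basis and the property of being a Riesz basis is invariant under reordering.
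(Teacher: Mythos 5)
Your proposal is correct and follows exactly the paper's argument: apply Corollary~\ref{estimate} to get $\Arrowvert T \Arrowvert \leq e^{\pi L d}-1 < 1$ from the hypothesis $L < \frac{\ln 2}{\pi d}$, then invert $I-T$ by its Neumann series and observe that $(I-T)e_k = f_k$ realizes the Riesz-basis isomorphism. No gaps; the extra remarks about density and reordering are fine but not needed.
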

\noindent The proof is immediate.  Note that equation~(\ref{norm2}) implies that the map $T$ given in Corollary \ref{estimate} has norm less than $1$.  We conclude that the map $(I-T)e_k = f_k$ is invertible by considering its Neumann series.\\

\noindent The proof of Corollary~(\ref{estimate}) and Corollary~(\ref{kadeclike}) are straightforward generalizations of the univariate result proved by Duffin and Eachus \cite{DE}. Kadec improved the value of the constant in the inequality~(\ref{ineq}) (for $d=1$) from $\frac{\ln(2)}{\pi}$ to the optimal value of 1/4; this is his celebrated ``1/4 theorem'' \cite{Kad}.\\

\noindent Kadec's method of proof is to expand $e^{i \delta x}$ with respect to the orthogonal basis $$\{1,\cos(nx),\sin \big( n-\frac{1}{2} \big)x  \}_{n \in \mathbb{N}}$$ for $L_2[-\pi,\pi]$, and use this expansion to estimate the norm of $T$.  In the proof of Corollary~(\ref{estimate}) and Corollary~(\ref{kadeclike}) we simply used a Taylor series.  Unlike the estimates in Kadec's Theorem, the estimate in equation~(\ref{norm2}) can be used for any sequence $(t_k)_{k \in \mathbb{N}} \subset \mathbb{R}^d$ such that $\sup_{k \in \mathbb{N}} \Arrowvert n_k - t_k \Arrowvert_\infty = L < \infty$, not only those for which the exponentials $(e^{i t_n x})_n$ form a Riesz basis. An impressive generalization of Kadec's 1/4 theorem when $d=1$ is Avdonin's ``1/4 in the mean'' theorem, \cite{AV}.\\

\noindent Sun and Zhou (see \cite{SZ} second half of Theorem 1.3) refined Kadec's argument to obtain a partial generalization of his result in higher dimensions:\\

\begin{thm}\label{sunzhou}
Let $(a_n)_{n \in \mathbb{Z}^d} \subset \mathbb{R}^d$ such that
$$0  <  L < \frac{1}{4},$$
$$D_d(L) := \Big(1-\cos \pi L+\sin \pi L  +\frac{\sin \pi L}{\pi L} \Big)^d -\Big( \frac{\sin \pi L}{\pi L} \Big)^d,\quad \mathrm{and}$$
$$\Arrowvert a_n - n \Arrowvert_\infty \leq L, \quad n \in \mathbb{Z}^d.$$
If $D_d(L)<1$, then $\big(\frac{1}{(2 \pi)^d} e^{i \langle a_n , (\cdot) \rangle}\big)$ is a Riesz basis for $L_2[-\pi,\pi]^d$ with frame bounds $(1-D_d(L))^2$ and $(1+D_d(L))^2$. 
\end{thm}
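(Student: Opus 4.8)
The plan is to adapt Kadec's original $d=1$ argument, working one coordinate at a time. Write $\delta_n = a_n - n$ for $n \in \mathbb{Z}^d$, so that $\|\delta_n\|_\infty \le L$, and set $e_n(x) = (2\pi)^{-d/2}e^{i\langle n,x\rangle}$, $f_n(x) = (2\pi)^{-d/2}e^{i\langle a_n,x\rangle}$; the family $(e_n)_{n\in\mathbb{Z}^d}$ is an orthonormal basis of $L_2[-\pi,\pi]^d$. Define $T$ on this basis by $Te_n = e_n - f_n$. It suffices to prove $\|Tg\| \le D_d(L)\|g\|$ for every finite linear combination $g$ of the $e_n$: since $D_d(L) < 1$, the operator $I - T$, which satisfies $(I-T)e_n = f_n$, is then invertible via its Neumann series with $\|I - T\| \le 1 + D_d(L)$ and $\|(I-T)^{-1}\| \le (1 - D_d(L))^{-1}$, so $(f_n)$ is a Riesz basis; and since $\sum_n|\langle h, f_n\rangle|^2 = \|(I-T)^*h\|^2$, the two bounds on $I - T$ give exactly the frame bounds $(1 - D_d(L))^2$ and $(1 + D_d(L))^2$.

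For the one-dimensional ingredients I would use the classical Fourier expansions on $[-\pi,\pi]$,
\[
\cos\delta x = \frac{\sin\pi\delta}{\pi\delta} + \sum_{k\ge1}\frac{2(-1)^k\delta\sin\pi\delta}{\pi(\delta^2-k^2)}\cos kx, \qquad \sin\delta x = \sum_{k\ge1}\frac{2(-1)^k\delta\cos\pi\delta}{\pi\big(\delta^2-(k-\frac12)^2\big)}\sin\Big(k-\tfrac12\Big)x,
\]
so that $e^{i\delta x} = \frac{\sin\pi\delta}{\pi\delta} + \phi^c_\delta(x) + i\phi^s_\delta(x)$ with $\phi^c_\delta$ a cosine series and $\phi^s_\delta$ a series in the shifted sines $\sin(k - \frac12)x$. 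The role of the half-integer shift is that multiplication by $e^{imx}$ ($m \in \mathbb{Z}$) carries $\phi^c_\delta$ into the span of the integer-frequency exponentials and $\phi^s_\delta$ into the span of the half-integer-frequency ones, and both $\{(2\pi)^{-1/2}e^{i\mu x} : \mu \in \mathbb{Z}\}$ and $\{(2\pi)^{-1/2}e^{i\mu x} : \mu \in \mathbb{Z}+\frac12\}$ are orthonormal in $L_2[-\pi,\pi]$. Using the partial-fraction expansions $\pi\cot\pi L = \frac1L - \sum_{k\ge1}\frac{2L}{k^2-L^2}$ and $\pi\tan\pi L = \sum_{k\ge1}\frac{2L}{(k-\frac12)^2-L^2}$, together with the elementary monotonicity on $[0,L] \subseteq [0,\frac14]$ of $t \mapsto \frac{\sin\pi t}{\pi t}$, $t\mapsto\frac{t\sin\pi t}{k^2-t^2}$ and $t\mapsto\frac{t\cos\pi t}{(k-\frac12)^2-t^2}$, one obtains, for $g = \sum_n a_ne^{inx}$, the three ``per-coordinate cost'' estimates
\[
\Big\|\sum_n a_n\Big(1-\tfrac{\sin\pi\delta_n}{\pi\delta_n}\Big)e^{inx}\Big\| \le \Big(1-\tfrac{\sin\pi L}{\pi L}\Big)\|g\|, \quad \Big\|\sum_n a_ne^{inx}\phi^c_{\delta_n}\Big\| \le \Big(\tfrac{\sin\pi L}{\pi L}-\cos\pi L\Big)\|g\|, \quad \Big\|\sum_n a_ne^{inx}\phi^s_{\delta_n}\Big\| \le \sin\pi L\,\|g\|,
\]
whose three bounds add up to $1 - \cos\pi L + \sin\pi L = D_1(L)$ — this recovers Kadec's theorem.

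For general $d$, write $e^{i\langle\delta_n,x\rangle} = \prod_{j=1}^d\big(\gamma_{n,j} + \psi_{n,j}(x_j)\big)$ with $\gamma_{n,j} = \frac{\sin\pi\delta_{n,j}}{\pi\delta_{n,j}} \in [\frac{\sin\pi L}{\pi L},1]$ and $\psi_{n,j} = \phi^c_{\delta_{n,j}} + i\phi^s_{\delta_{n,j}}$, expand the product over subsets $S \subseteq \{1,\dots,d\}$ (the coordinates contributing a $\psi$ rather than a $\gamma$) and then expand each $\psi_{n,j}$ over $T \subseteq S$ (those contributing $\phi^s$ rather than $\phi^c$). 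Applying $T$ to $g = \sum_n c_ne_n$ and twisting by $e^{i\langle n,x\rangle}$, the $(S,T)$-summand lies in the orthonormal system $\{(2\pi)^{-d/2}e^{i\langle\mu,x\rangle}\}$ with $\mu_j \in \mathbb{Z}$ for $j \notin T$ and $\mu_j \in \mathbb{Z}+\frac12$ for $j \in T$; iterating the three one-dimensional estimates coordinate by coordinate (an iterated Minkowski inequality on the multi-indexed coefficient arrays, tracking the $\pm$ branches coming from $\cos kx = \frac12(e^{ikx}+e^{-ikx})$ and $\sin(k-\frac12)x = \frac{1}{2i}(e^{i(k-1/2)x}-e^{-i(k-1/2)x})$ in each coordinate) bounds its norm by $\big(\frac{\sin\pi L}{\pi L}-\cos\pi L\big)^{|S\setminus T|}(\sin\pi L)^{|T|}\|g\|$, while the leftover $S=\emptyset$ term $\sum_n c_n\big(1-\prod_j\gamma_{n,j}\big)e_n$ is bounded by $\big(1-(\frac{\sin\pi L}{\pi L})^d\big)\|g\|$ since each $\gamma_{n,j} \ge \frac{\sin\pi L}{\pi L}$. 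Summing over all $(S,T)$ and applying the binomial theorem to $\sum_{S}\sum_{T\subseteq S}(\cdots)^{|S\setminus T|}(\cdots)^{|T|}$ yields
\[
\|Tg\| \le \Big(1-\big(\tfrac{\sin\pi L}{\pi L}\big)^d\Big)\|g\| + \Big[\big(1 + \tfrac{\sin\pi L}{\pi L}-\cos\pi L+\sin\pi L\big)^d - 1\Big]\|g\| = D_d(L)\,\|g\|.
\]

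The main obstacle is the bookkeeping in this last step: making precise the claim that the per-coordinate costs multiply requires expanding each $(S,T)$-term into the correct product orthonormal system of $L_2[-\pi,\pi]^d$ and running the iterated triangle/Minkowski argument on multi-indexed coefficient arrays without losing track of the branch signs. By contrast, the one-dimensional inputs, the cotangent and tangent identities, and the monotonicity checks valid for $L < \frac14$ are routine once they are isolated.
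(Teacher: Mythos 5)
The paper offers no proof of Theorem \ref{sunzhou}: it is quoted from Sun and Zhou \cite{SZ}, so there is nothing in the text to compare your argument against line by line. Your proposal is, as far as I can check, a correct reconstruction of the Sun--Zhou refinement of Kadec's method, and the strongest internal check is that the constants close up exactly: the $S=\emptyset$ term contributes $1-\big(\tfrac{\sin\pi L}{\pi L}\big)^d$, the nonempty $(S,T)$ terms sum via the binomial theorem to $\big(1+\tfrac{\sin\pi L}{\pi L}-\cos\pi L+\sin\pi L\big)^d-1$, and the total is precisely $D_d(L)$; likewise the frame bounds $(1\pm D_d(L))^2$ follow from $\sum_n|\langle h,f_n\rangle|^2=\Arrowvert(I-T)^*h\Arrowvert^2$ together with $\Arrowvert T\Arrowvert\le D_d(L)<1$, exactly as you say. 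The two points a full write-up must nail down are both correctly anticipated in your sketch: (i) for each fixed $(S,T)$, multi-index $(k_j)_{j\in S}$ and sign pattern $\epsilon$, the frequency map $n\mapsto\mu(n,k,\epsilon)$ is a translation of $\mathbb{Z}^d$ into a single product orthonormal system (integer frequencies off $T$, half-integer on $T$), which is what licenses taking the $\ell_2$ norm of the coefficient array before summing $\sum_k\sup_{|\delta|\le L}|c_k(\delta)|=\tfrac{\sin\pi L}{\pi L}-\cos\pi L$ and $\sum_k\sup_{|\delta|\le L}|s_k(\delta)|=\sin\pi L$ via the cotangent and tangent expansions (the monotonicity checks all go through on $[0,\tfrac14]$, and the coefficient functions are even in $\delta$, so the suprema are attained at $|\delta|=L$); and (ii) the factors $\gamma_{n,j}\le 1$ for $j\notin S$ are simply discarded. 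For contrast, the result the paper does prove in this section, Corollary \ref{kadeclike}, replaces this orthogonal-expansion apparatus by the Taylor-series bound $\Arrowvert T\Arrowvert\le e^{\pi Ld}-1$ of Lemma \ref{finiteversion}, which is cruder (constant $\tfrac{\ln 2}{\pi d}$) but applies to arbitrary bounded perturbations; your argument is what buys the sharper hypothesis $D_d(L)<1$. One cosmetic remark: the normalization $\tfrac{1}{(2\pi)^d}$ in the statement should presumably be $\tfrac{1}{(2\pi)^{d/2}}$, as in your proof, for the stated frame bounds to come out as written.
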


\noindent In the one-dimensional case, Kadec's theorem is recovered exactly from Theorem \ref{sunzhou}, When $d>1$, the value $x_d$ satisfying $0 < x_d <1/4$ and $D_d(x_d)=1$ is an upper bound for any value of $L$ satisfying  $0 <L < 1/4$ and $D_d(L) < 1$.  The value of $x_d$ is not readily apparent, whereas the constant in Corollary \ref{kadeclike} is $\frac{\ln 2}{\pi d}$.  A relationship between this number and $x_d$ is given in the following theorem (whose proof is omitted).
\begin{thm}
Let $x_d$ be the unique number satisfying $0 < x_d <1/4$ and $D_d(x_d)=1$.  Then
$$\lim_{d \rightarrow \infty} \frac{x_d -\frac{\ln 2}{\pi d}}{\frac{(\ln 2)^2}{12 \pi d^2}} =1. $$
\end{thm}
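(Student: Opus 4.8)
The plan is to substitute $u = \pi L$ and write $D_d(L) = a(u)^d - b(u)^d$, where
\[
a(u) = 1 - \cos u + \sin u + \frac{\sin u}{u}, \qquad b(u) = \frac{\sin u}{u},
\]
so that only the behaviour of $a$ and $b$ near $u = 0$ is relevant. I would first record the elementary facts about these functions on $(0,\pi/4]$: $b$ is strictly decreasing there; $a$ is strictly increasing there (differentiate and use $\sin u - u\cos u = \int_0^u t\sin t\,dt \leq u^3/3$, which gives $a'(u) \geq 1 - \pi/12 > 0$); and $1 = a(0^+) < a(u)$ on $(0,\pi/4]$. With $D_d(0^+) = 0$ and $D_d(1/4) = (1 + 2\sqrt2/\pi)^d - (2\sqrt2/\pi)^d \geq 1$ (nondecreasing in $d$ since $2\sqrt2/\pi < 1$, with strict inequality for $d \geq 2$), this makes $D_d$ strictly increasing on $(0,1/4]$ and shows that $x_d \in (0,1/4)$ exists and is unique for $d \geq 2$. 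Writing $u_d = \pi x_d$, $v_d = d u_d$ and $w_d := v_d - \ln 2$, one has $x_d - \tfrac{\ln 2}{\pi d} = \tfrac{w_d}{\pi d}$, so the quotient in the statement equals $\tfrac{12\, d w_d}{(\ln 2)^2}$; hence it suffices to prove $d w_d \to (\ln 2)^2/12$ (which in particular forces $v_d \to \ln 2$).

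Next I would pin down the crude asymptotics. The defining relation is $a(u_d)^d = 1 + b(u_d)^d$, and since $0 \leq b \leq 1$ on $(0,\pi/4]$ this forces $a(u_d)^d \in [1,2]$; if $x_d$ did not tend to $0$, then $a(u_d)^d \to \infty$ along a subsequence, a contradiction, so $u_d \to 0$. From $a(u_d)^d \leq 2$ and $\ln a(u) \geq u/2$ for small $u$ one gets $v_d = d u_d \leq 2\ln 2$ eventually, hence $u_d = O(1/d)$ and $d u_d^2 \to 0$; therefore $b(u_d)^d = \exp\!\big(-\tfrac{d u_d^2}{6} + O(d u_d^4)\big) \to 1$, so $a(u_d)^d \to 2$ and $v_d \to \ln 2$, i.e. $w_d \to 0$.

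Then comes the refined expansion. From the series of $\sin$ and $\cos$, $\ln a(u) = u - \tfrac{u^2}{6} + O(u^3)$ and $\ln b(u) = -\tfrac{u^2}{6} + O(u^4)$ as $u \to 0$. Substituting $u = v_d/d$ (with $v_d$ bounded) gives $d\ln a(u_d) = v_d - \tfrac{v_d^2}{6d} + O(1/d^2)$ and $d\ln b(u_d) = -\tfrac{v_d^2}{6d} + O(1/d^3)$; exponentiating and using $v_d = \ln 2 + w_d$,
\[
a(u_d)^d = 2 + 2w_d - \frac{(\ln 2)^2}{3d} + O(w_d^2) + O\!\big(\tfrac{w_d}{d}\big) + O\!\big(\tfrac{1}{d^2}\big), \qquad b(u_d)^d = 1 - \frac{(\ln 2)^2}{6d} + O\!\big(\tfrac{w_d}{d}\big) + O\!\big(\tfrac{1}{d^2}\big).
\]
Subtracting and invoking $a(u_d)^d - b(u_d)^d = 1$ gives $2 w_d = \tfrac{(\ln 2)^2}{6d} + O(w_d^2) + O(\tfrac{w_d}{d}) + O(\tfrac{1}{d^2})$. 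A two-stage bootstrap finishes it: since $w_d \to 0$, for large $d$ the error terms are bounded by $\tfrac12|w_d|$, so $w_d = O(1/d)$; re-inserting that bound collapses those errors to $O(1/d^2)$, whence $w_d = \tfrac{(\ln 2)^2}{12 d} + O(1/d^2)$ and $d w_d \to (\ln 2)^2/12$, as required.

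The hard part is not conceptual but the remainder bookkeeping: once $u_d = \Theta(1/d)$ is established, every error term produced by the Taylor expansions of $\ln a$, $\ln b$, and $\exp$ must be shown to be $O(1/d^2)$, so that none of them corrupts the $1/d$ coefficient that determines $w_d$; and the short bootstrap upgrading $w_d = o(1)$ to $w_d = O(1/d)$ must be carried out with care, since the $O$-constants there are absolute. Everything else is routine power-series manipulation.
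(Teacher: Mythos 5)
The paper omits its proof of this theorem entirely, so there is nothing to compare your argument against; judged on its own, your proposal is correct. I verified the key expansions ($a(u)=1+u+\tfrac{u^2}{3}+O(u^3)$, hence $\ln a(u)=u-\tfrac{u^2}{6}+O(u^3)$ and $\ln b(u)=-\tfrac{u^2}{6}+O(u^4)$), the preliminary localization $u_d=\Theta(1/d)$, $v_d\to\ln 2$, and the balance $2w_d=\tfrac{(\ln 2)^2}{6d}+O(w_d^2)+O(w_d/d)+O(1/d^2)$, which after your bootstrap gives $dw_d\to\tfrac{(\ln 2)^2}{12}$ and hence the stated limit; the monotonicity argument for existence and uniqueness (using $\sin u-u\cos u=\int_0^u t\sin t\,dt\le u^3/3$ and $\tfrac{\sin u}{u}>\cos u$) is also sound, with the understood caveat that $x_d\in(0,1/4)$ only exists for $d\ge 2$ since $D_1(1/4)=1$ exactly. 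The remaining work is, as you say, routine remainder bookkeeping, and nothing in it threatens the $1/d$ coefficient.
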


\noindent Thus, for sufficiently large $d$, Theorem \ref{sunzhou} and Corollary \ref{kadeclike} are essentially the same.\\

\section{A method of approximation of biorthogonal functions and a recovery of a theorem of Levinson}\label{S:6}

\noindent In this section we apply the techniques developed in the previously to approximate the biorthogonal functions to Riesz bases $\big(\frac{1}{\sqrt{2\pi}}e^{i t_n (\cdot)}\big)$ for which the preframe operator is small perturbation of the identity.  This is the content of Theorem 7.1.  A well known theorem of Levinson (see \cite[pages 47-67]{L}), follows as a corollary to Theorem 7.1.\\

\begin{defin}
A Kadec sequence is a sequence $(t_n)_{n \in \mathbb{Z}}$ of real numbers satisfying $$\sup_{n \in \mathbb{Z}} |t_n -n| = D < 1/4.$$
\end{defin}

\begin{thm}\label{levinsonlike}
Let $(t_n)_{n \in \mathbb{Z}}\subset \mathbb{R}$ be a sequence (with $t_n \neq 0$ for $n \neq 0$) such that $ (f_n)_n = \big(\frac{1}{\sqrt{2\pi}} e^{i t_n (\cdot)} \big)_n$ is a Riesz basis for $L_2[-\pi,\pi]$, and let $(e_n)_n$ be the standard exponential orthonormal basis for $L_2[-\pi,\pi]$.  If the map $L$ given by $L e_n = f_n$ satisfies the estimate $\Arrowvert I-L \Arrowvert<1,$ then the biorthogonals $G_n$ of $\frac{1}{\sqrt{2\pi}}\mathcal{F}(f_n)(\cdot) = \mathrm{sinc}(\pi(\cdot-t_n))$ in $PW_{[-\pi,\pi]}$ are
\begin{equation}
G_n (t) = \frac{H(t)}{(t-t_n)H^{'}(t_n)}, \quad n \in \mathbb{Z},
\end{equation}
where
\begin{equation}\label{holomorphic}
H(t) = (t-t_0) \prod_{n=1}^{\infty} \Big(1-\frac{t}{t_n} \Big) \Big(1-\frac{t}{t_{-n}} \Big).
\end{equation}
\end{thm}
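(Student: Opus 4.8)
The plan is to establish the analytic properties of the product $H$ and then recognize $\widetilde G_n(t):=H(t)/\big((t-t_n)H'(t_n)\big)$ as the (unique) biorthogonal system by a short interpolation argument. Since the kernel in the reproducing formula~\eqref{repker} is real, one has $\langle g,\mathrm{sinc}(\pi(\cdot-t))\rangle=g(t)$ for every $g\in PW_{[-\pi,\pi]}$, so a family $(G_n)_n\subset PW_{[-\pi,\pi]}$ is biorthogonal to $(\mathrm{sinc}(\pi(\cdot-t_n)))_n$ exactly when $G_n(t_m)=\delta_{nm}$. Because $\tfrac{1}{\sqrt{2\pi}}\mathcal F$ is unitary from $L_2[-\pi,\pi]$ onto $PW_{[-\pi,\pi]}$ and carries $f_n$ to $\mathrm{sinc}(\pi(\cdot-t_n))$, the hypothesis that $(f_n)_n$ is a Riesz basis makes $(\mathrm{sinc}(\pi(\cdot-t_n)))_n$ a Riesz basis of $PW_{[-\pi,\pi]}$, hence complete; so the biorthogonal system exists and is unique, and the theorem reduces to showing that each $\widetilde G_n$ is a well-defined element of $PW_{[-\pi,\pi]}$ satisfying $\widetilde G_n(t_m)=\delta_{nm}$.

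I would begin by using $\|I-L\|<1$ to control $\delta_n:=t_n-n$. Evaluating $I-L$ on a single $e_n$,
\[
\|e_n-f_n\|^2=\tfrac{1}{2\pi}\int_{-\pi}^{\pi}\big|e^{i\delta_nx}-1\big|^2\,dx=2\big(1-\mathrm{sinc}(\pi\delta_n)\big)\le\|I-L\|^2<1 ,
\]
so $\mathrm{sinc}(\pi\delta_n)>\tfrac12$ and hence $\sup_n|\delta_n|<1$; moreover the $t_n$ are distinct ($L$ being injective) and, being the frequencies of a Riesz basis of exponentials for an interval, uniformly separated. Grouping the factors with indices $n$ and $-n$ gives $(1-t/t_n)(1-t/t_{-n})=1-t(t_n^{-1}+t_{-n}^{-1})+t^2(t_nt_{-n})^{-1}$ with $t_n^{-1}+t_{-n}^{-1}=(\delta_n+\delta_{-n})(t_nt_{-n})^{-1}=O(n^{-2})$ and $(t_nt_{-n})^{-1}=O(n^{-2})$, so the product for $H$ converges uniformly on compacta and $H$ is entire. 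As the $t_n$ are distinct, $H$ has a simple zero at each $t_n$ and no other zeros; consequently $H'(t_n)\neq0$, the function $H/(\cdot-t_n)$ is entire with $\lim_{t\to t_n}H(t)/(t-t_n)=H'(t_n)$, and therefore $\widetilde G_n(t_m)=\delta_{nm}$ at once. It remains only to place $\widetilde G_n$ in $PW_{[-\pi,\pi]}$.

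This last point is where the real work lies, and I expect it to be the main obstacle: one must show that $H$ has exponential type $\le\pi$ and that $x\mapsto H(x)/(x-t_n)$ belongs to $L_2(\mathbb R)$, and then invoke the Paley--Wiener theorem (cf.\ \cite[page 85]{Y}) to conclude $\widetilde G_n\in PW_{[-\pi,\pi]}$. The standard device is comparison with $\sin(\pi\,\cdot)=\pi t\prod_{n\ge1}(1-t^2/n^2)$. Taking the quotient factor by factor, $\frac{1-t/t_n}{1-t/n}=\frac{n}{t_n}\big(1+\frac{\delta_n}{n-t}\big)$, and exploiting $\sup_n|\delta_n|<\infty$ together with the $\pm n$ pairing, one obtains $\log|H(iy)|=\pi|y|+O(\log|y|)$ as $|y|\to\infty$ (so $H$ is of exponential type exactly $\pi$, with indicator $\pi|\sin\theta|$) and a real-axis estimate $|H(x)|\le C|\sin\pi x|$ outside a fixed radius-$\rho$ neighborhood of $\{t_n\}$, while $|H|$ is bounded on those (disjoint) discs by the maximum principle. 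Combining these with the uniform separation of the $t_n$ gives $|H(x)/(x-t_n)|=O\big((1+|x-t_n|)^{-1}\big)\in L_2(\mathbb R)$. By uniqueness of the biorthogonal system this forces $G_n=\widetilde G_n$, which is the assertion.

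Finally, Levinson's theorem is recovered by specializing to a Kadec sequence, $\sup_n|t_n-n|=D<\tfrac14$: the estimate in the proof of Kadec's $1/4$ theorem (the $d=1$ instance of the bound underlying Theorem~\ref{sunzhou}) gives $\|I-L\|\le 1-\cos\pi D+\sin\pi D<1$, so the hypotheses of Theorem~\ref{levinsonlike} are met and the formula for the $G_n$ applies verbatim.
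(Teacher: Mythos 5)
Your overall strategy is genuinely different from the paper's: you try to identify $G_n$ directly by showing that $\widetilde G_n=H/((\cdot-t_n)H'(t_n))$ lies in $PW_{[-\pi,\pi]}$ and interpolates $\delta_{nm}$ at the nodes, then invoke uniqueness of biorthogonals; the paper instead truncates the node sequence, computes the biorthogonals $G_{l,n}$ of the truncated Riesz bases in closed form via the WKS theorem (so that membership in $PW_{[-\pi,\pi]}$ is automatic), and passes to the limit using Proposition~\ref{asdf} and Lemma~\ref{biorthapprox} together with $H_l\to H$, $H_l'\to H'$ locally uniformly. Most of your preliminary steps are fine: the computation $\|e_n-f_n\|^2=2(1-\mathrm{sinc}(\pi\delta_n))<1$ does force $\sup_n|\delta_n|<1$, the paired product does converge locally uniformly, the zeros of $H$ are exactly the (distinct, separated) $t_n$ and are simple, and biorthogonality is indeed equivalent to $G_n(t_m)=\delta_{nm}$ by~(\ref{repker}).

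The gap is at the step you yourself flag as ``where the real work lies,'' and it is not a technicality. You assert a real-axis bound $|H(x)|\le C|\sin\pi x|$ off a neighborhood of $\{t_n\}$, but the only inputs you feed into the product estimates are $\sup_n|\delta_n|<1$ and uniform separation, and under those hypotheses alone that bound is false. The factor-by-factor comparison with $\sin\pi x$ produces terms $\delta_n/(n-x)$ whose sum is only $O(L\log|x|)$ even after the $\pm n$ pairing, so what one actually gets is polynomial growth $|H(x)|=O((1+|x|)^{cL})$ with $c$ an absolute constant; for a concrete failure, take $t_0=0$ and $t_n=n-\tfrac12\,\mathrm{sign}(n)$, for which $H(t)=c\,t\cos\pi t$ is unbounded relative to $\sin\pi x$ and $H(x)/(x-t_n)\notin L_2(\mathbb{R})$. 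That particular node set is not a Riesz basis (it is overcomplete), which is precisely the point: to get $H/(\cdot-t_n)\in L_2(\mathbb{R})$ you must use the Riesz-basis hypothesis and $\|I-L\|<1$ in the growth estimates themselves, and your outline never does. Since $\|I-L\|<1$ only forces $\sup_n|\delta_n|\lesssim 0.6$, well above the $1/4$ threshold at which the naive product estimates stop giving square-integrability, the Paley--Wiener step cannot be completed as written. The paper's truncation argument sidesteps this entirely: each $G_{l,n}$ is by construction the biorthogonal of a Riesz basis (hence already in $PW_{[-\pi,\pi]}$), its closed form is obtained by finite interpolation rather than by estimating an infinite product on the real line, and the limit is taken in the $PW$-norm. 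To salvage your route you would need a genuinely new lower/upper bound on $|H|$ exploiting the operator hypothesis, which is essentially the content you are trying to prove.
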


\begin{defin}
Let $(t_n)_{n \in \mathbb{Z}}\subset \mathbb{R}$ be a sequence such that $(f_n)_n = \big(\frac{1}{\sqrt{2\pi}} e^{i t_n (\cdot)} \big)_n$ is a Riesz basis for $L_2[-\pi,\pi]$.    If $l \geq 0$, the $l$-truncated sequence $(t_{l,n})_{n \in \mathbb{Z}}$ is defined by $t_{l,n} = t_n$ if $|n| \leq l$ and $t_{l,n} = n$ otherwise.  Define $f_{l,n} = \frac{1}{\sqrt{2\pi}}e^{i t_{l,n}(\cdot)}$ for $n \in \mathbb{Z},$ $l \geq 0$.
\end{defin}

\noindent Let $P_l: L_2[-\pi,\pi] \rightarrow L_2[-\pi,\pi]$ be the orthogonal projection onto $\mathrm{span}\{e_{-l},\ldots, e_l \}$.\\

\begin{prop}\label{asdf}
Let $(t_n)_{n \in \mathbb{Z}}\subset \mathbb{R}$ be a sequence such that $(f_n)_n$ (defined above) is a Riesz basis for $L_2[-\pi,\pi]$. If $(e_n)_n$ is the standard exponential orthonormal basis for $L_2[-\pi,\pi]$ and the map $L$ (defined above) satisfies the estimate $\Arrowvert I-L \Arrowvert = \delta <1$, then the following are true:\\

\noindent 1) For $l \geq 0$ , the sequence $(f_{l,n})_n$ is a Riesz basis for $L_2[-\pi,\pi]$.\\
\noindent 2) For $l \geq 0$, the map $L_l$ defined by $L_l e_n = f_{l,n}$ satisfies $ \Arrowvert L_l^{-1} \Arrowvert \leq \frac{1}{1-\delta}.$

\end{prop}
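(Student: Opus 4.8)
The plan is to reduce both assertions to a single algebraic identity relating $L_l$ to $L$ and the projection $P_l$, and then to run the standard Neumann-series argument. First I would record what the truncation does on the level of the basis: by definition of $(t_{l,n})_n$ we have $f_{l,n} = f_n$ whenever $|n| \le l$ and $f_{l,n} = e_n$ whenever $|n| > l$. Evaluating $I - L_l$ on the orthonormal basis $(e_n)_n$ then gives, for $|n| \le l$, $(I-L_l)e_n = e_n - f_n = (I-L)e_n = (I-L)P_l e_n$, and for $|n| > l$, $(I-L_l)e_n = e_n - e_n = 0 = (I-L)P_l e_n$. Since two bounded operators agreeing on an orthonormal basis coincide, this yields $I - L_l = (I-L)P_l$, equivalently $L_l = (I - P_l) + L P_l$.

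The second step is a one-line norm estimate from this identity together with $\Arrowvert P_l \Arrowvert = 1$: $\Arrowvert I - L_l \Arrowvert = \Arrowvert (I-L)P_l \Arrowvert \le \Arrowvert I - L \Arrowvert\,\Arrowvert P_l \Arrowvert = \delta < 1$. Hence $L_l = I - (I - L_l)$ is invertible, with inverse the Neumann series $\sum_{k \ge 0}(I - L_l)^k$, and $\Arrowvert L_l^{-1} \Arrowvert \le \big(1 - \Arrowvert I - L_l \Arrowvert\big)^{-1} \le (1-\delta)^{-1}$; this is exactly assertion (2). For assertion (1), note that $L_l$ is then a bounded linear bijection of $L_2[-\pi,\pi]$ onto itself satisfying $L_l e_n = f_{l,n}$, which by the definition of a Riesz basis means precisely that $(f_{l,n})_n$ is a Riesz basis for $L_2[-\pi,\pi]$.

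I do not expect a genuine obstacle here: the argument is the same Neumann-series perturbation of the identity used in the proof of Corollary~\ref{kadeclike}. The only point requiring care is the index bookkeeping in verifying $I - L_l = (I-L)P_l$ on the basis — being consistent about which indices are frozen by the truncation — together with the observation that $L_l$ is automatically bounded once this identity is in hand, so that no separate Bessel-type argument for the truncated exponential system is needed.
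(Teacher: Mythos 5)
Your proof is correct and follows essentially the same route as the paper: both establish the key identity $I-L_l=(I-L)P_l$ by evaluating on the basis, deduce $\Arrowvert I-L_l\Arrowvert\le\delta<1$, and invoke the Neumann series for invertibility and the bound $\Arrowvert L_l^{-1}\Arrowvert\le(1-\delta)^{-1}$. You merely spell out the Neumann-series step and the boundedness of $L_l$, which the paper leaves implicit.
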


\begin{proof}
If $(c_n)_n \in \ell_2 (\mathbb{Z})$, then
\begin{eqnarray}
(I-L_l)\big(\sum_n c_n e_n \big) & = & \sum_n c_n (e_n-L_l e_n)  = \sum_{|n|\leq l} (e_n -f_n)
= (I-L) P_l \big(\sum_n c_n e_n \big)\nonumber,
\end{eqnarray}
so that
\begin{equation}\label{asdfg}
(I-L_l) = (I-L)P_l.
\end{equation}
From this, $\Arrowvert I-L_l \Arrowvert \leq \delta$, which implies 1) and 2).
\end{proof}

\noindent Define the biorthogonal functions of $(f_{l,n})_n$ to be $(f_{l,n}^*)_n$.  Passing to the Fourier transform, we have $\frac{1}{\sqrt{2\pi}}\mathcal{F}(f_{l,n})(t) = \mathrm{sinc}(\pi(t-t_{l,n}))$ and $G_{l,n} (t) := \frac{1}{\sqrt{2\pi}}\mathcal{F}(f_{l,n}^*)(t)$.  Define the biorthogonal functions of $(f_n)_n$ similarly.\\

\begin{lem}\label{biorthapprox}
If $(t_n)_n \subset \mathbb{R}$ satisfies the hypotheses of proposition \ref{asdf}, then $$\lim_{l \rightarrow \infty} G_{l,n} = G_n$$ in $PW_{[-\pi,\pi]}$.
\end{lem}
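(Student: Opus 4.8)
The plan is to move everything to $L_2[-\pi,\pi]$ through the Fourier transform and reduce the claim to a norm estimate on biorthogonal sequences. Recall that $G_n = \frac{1}{\sqrt{2\pi}}\mathcal{F}(f_n^*)$ and $G_{l,n} = \frac{1}{\sqrt{2\pi}}\mathcal{F}(f_{l,n}^*)$, where $(f_n^*)_n$ and $(f_{l,n}^*)_n$ are the biorthogonal sequences of the Riesz bases $(f_n)_n$ and $(f_{l,n})_n$. Since $\frac{1}{\sqrt{2\pi}}\mathcal{F}$ restricts to an isomorphism of $L_2[-\pi,\pi]$ onto $PW_{[-\pi,\pi]}$, it will suffice to show that, for each fixed $n$, $f_{l,n}^* \to f_n^*$ in $L_2[-\pi,\pi]$ as $l \to \infty$.

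First I would identify the biorthogonals in terms of the preframe operators. Since $Le_n = f_n$, a one-line computation gives $\langle f_m, (L^*)^{-1}e_n\rangle = \langle e_m, e_n\rangle = \delta_{mn}$, so $f_n^* = (L^*)^{-1}e_n$; likewise $f_{l,n}^* = (L_l^*)^{-1}e_n$. By Proposition~\ref{asdf}, each $L_l$ is invertible with $\|L_l^{-1}\| \le \frac{1}{1-\delta}$, a bound uniform in $l$ — this uniformity is the crucial ingredient for the passage to the limit.

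Next I would write the difference as a resolvent-type expression and exploit the fact that passing from $L$ to $L_l$ only alters the coordinates $|n| > l$. From $A^{-1} - B^{-1} = A^{-1}(B-A)B^{-1}$ with $A = L_l^*$, $B = L^*$,
\[
f_{l,n}^* - f_n^* = (L_l^*)^{-1}(L^* - L_l^*)(L^*)^{-1}e_n .
\]
By equation~(\ref{asdfg}), $I - L_l = (I-L)P_l$, so $L - L_l = -(I-L)(I-P_l)$ and, $P_l$ being a self-adjoint projection, $L^* - L_l^* = -(I-P_l)(I-L^*)$. Substituting, and using $(L^*)^{-1}e_n = f_n^*$ together with $(I-L^*)f_n^* = f_n^* - L^*f_n^* = f_n^* - e_n$, one obtains
\[
f_{l,n}^* - f_n^* = -(L_l^*)^{-1}(I-P_l)(f_n^* - e_n).
\]
Since $f_n^* - e_n$ is a fixed vector and $(I-P_l) \to 0$ strongly on $L_2[-\pi,\pi]$ (because $(e_m)_m$ is an orthonormal basis), the uniform bound $\|(L_l^*)^{-1}\| = \|L_l^{-1}\| \le \frac{1}{1-\delta}$ yields $\|f_{l,n}^* - f_n^*\| \le \frac{1}{1-\delta}\|(I-P_l)(f_n^* - e_n)\| \to 0$. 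Applying $\frac{1}{\sqrt{2\pi}}\mathcal{F}$ then gives $G_{l,n} \to G_n$ in $PW_{[-\pi,\pi]}$ (and, by the facts on $PW_E$ recalled in Section~\ref{S:1}, uniformly as well).

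I do not expect a genuine obstacle here; the work is entirely bookkeeping that converts the statement ``only the tail coordinates are perturbed'' into the operator factorization $L - L_l = -(I-L)(I-P_l)$, so that a strongly null factor $I - P_l$ is produced and ends up acting on the fixed vector $f_n^* - e_n$. The only point needing care is the adjoint computation (keeping $P_l = P_l^*$) so that the projection lands on the correct side; the uniform invertibility estimate of Proposition~\ref{asdf} then closes the argument.
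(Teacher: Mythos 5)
Your proposal is correct and follows essentially the same route as the paper: identify $f_{l,n}^*=(L_l^*)^{-1}e_n$ and $f_n^*=(L^*)^{-1}e_n$, apply the resolvent identity, use $L-L_l=(L-I)(I-P_l)$ from equation~(\ref{asdfg}) to place the strongly-null factor $I-P_l$ against a fixed vector, and close with the uniform bound $\|L_l^{-1}\|\le\frac{1}{1-\delta}$ from Proposition~\ref{asdf}. The only cosmetic difference is that you rewrite $(L^*-I)(L^*)^{-1}e_n$ as $e_n-f_n^*$, which the paper leaves unexpanded.
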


\begin{proof}
Note that
\begin{eqnarray}
\delta_{nm} = \langle f_{l,n} , f_{l,m}^* \rangle = \langle L_l e_n , f_{l,m}^* \rangle = \langle e_n , L_l^* f_{l,m}^*  \rangle\nonumber
\end{eqnarray}
so that for all $m$, $f_{l,m}^* = (L_l^*)^{-1}e_m$.  Similarly, $f_m^* = (L^*)^{-1}e_m$.  We have
\begin{eqnarray}
f_{l,m}^* - f_m^* &=& ((L_l^*)^{-1}-(L^*)^{-1})e_m = (L_l^*)^{-1}(L^*-L_l^*)(L^*)^{-1}e_m.\nonumber
\end{eqnarray}
Now equation (\ref{asdfg}) implies $L-L_l = (L-I)(I-P_l)$, so that
\begin{eqnarray}
f_{l,m}^* - f_m^*  = (L_l^*)^{-1}(I-P_l)(L^*-I)(L^*)^{-1}e_m\nonumber.
\end{eqnarray}
Applying proposition \ref{asdf} yields
\begin{eqnarray}
\Arrowvert f_{l,m}^* - f_m^* \Arrowvert \leq \frac{1}{1-\delta}\Arrowvert (I-P_l)(L^*-I)(L^*)^{-1}e_m \Arrowvert,\nonumber
\end{eqnarray}
which for fixed $m$ goes to $0$ as $l \rightarrow \infty$.  We conclude $\lim_{l \rightarrow \infty} f_{l,m}^* = f_m^*$, which, upon passing to the Fourier transform, yields $\lim_{l \rightarrow \infty} G_{l,m} = G_m$. \end{proof}

\noindent Proof of Theorem \ref{levinsonlike}.\\

\noindent We see that $\delta_{nm} = \langle G_{l,m} , S_{l,n} \rangle$, where $S_{l,n} (t) =\mathrm{sinc}(\pi(t-t_n))$ when $|n| \leq l$ and $ S_{l,n} (t) =\mathrm{sinc}(\pi(t-n))$ when $|m| > l$.
Without loss of generality, let $|m|<l$. Equation (\ref{repker}) implies that $G_{l,m}(k)=0$ when $|k|>l$.  By the WKS theorem we have
\begin{eqnarray}
G_{l,m} (t) & = & \sum_{k=-l}^{k=l} G_{l,m} (k) \mathrm{sinc}(\pi(t-k)) = \Big( \sum_{k=-l}^{k=l} \frac{ t G_{l,m} (k)}{k-t} \Big) \mathrm{sinc}(\pi t)\nonumber\\ & = & \frac{w_l(t)}{\prod_{k=1}^{l} (k-t)(-k-t)} \mathrm{sinc}(\pi t),\nonumber
\end{eqnarray}
where $w_l$ is a polynomial of degree at most $2l$.  Noting that $$\mathrm{sinc}(\pi t) = \prod_{k=1}^{\infty} \big( 1-\frac{t^2}{k^2} \big) \quad \mathrm{and} \quad \prod_{k=1}^{l} (k-t)(-k-t) = (-1)^l (l!)^2 \prod_{k=1}^{l} \big( 1-\frac{t^2}{k^2} \big),$$ we have
\begin{eqnarray}
G_{l,m} (t) = \frac{(-1)^l w_l (t)}{(l!)^2} \prod_{k=l+1}^{\infty} \big( 1-\frac{t^2}{k^2} \big).\nonumber
\end{eqnarray}
Again by equation~(\ref{repker}), $\delta_{nm} = G_{l,m} (t_n)$ when $|n| \leq l$ so that $$\delta_{nm} = \frac{(-1)^l}{(l!)^2} w_l (t_n) \prod_{k=l+1}^{\infty} \big( 1-\frac{t_n^2}{k^2} \big).$$  This determines the zeroes of $w_l$.  We deduce that $$w_l(t) = \frac{c_l \prod_{k=1}^{k=l} (t-t_k)(t-t_{-k})}{t-t_m}$$ for some constant $c_l$.  Absorbing constants, we have
\begin{equation}
G_{l,m} (t)  =  \frac{c_l H_l (t)}{t-t_m},\nonumber
\end{equation}
where
\begin{equation}
H_l (t) := (t-t_0) \prod_{k=1}^{l} \big(1-\frac{t}{t_k} \big) \big( 1-\frac{t}{t_{-k}} \big)  \prod_{l+1}^{\infty} \big( 1-\frac{t^2}{k^2} \big).\nonumber
\end{equation}

\noindent Now $0 = H_l (t_m)$, so $G_{l,m} (t) = c_l \frac{H_l (t) -H_l (t_m)}{t-t_m}$.  Taking limits, $c_l = \frac{1}{(H_l)'(t_m)}$.  This yields 
\begin{equation} \\
G_{l,m} (t) = \frac{H_l (t)}{(t-t_m) H_l'(t_m) }.\nonumber
\end{equation}
Define $$H(t) = (t-t_0)\prod_{k=1}^{\infty}  \big(1 - \frac{t}{t_k} \big) \big( 1 - \frac{t}{t_{-k}} \big).$$  Basic complex analysis shows that $H$ is entire, and $H_l \rightarrow H$ and $H_l' \rightarrow H'$ uniformly on compact subsets of $\mathbb{C}$.
Furthermore, $H'(t_k) \neq 0$ for all $k$, since each $t_k$ is a zero of $H$ of multiplicity one. Together we have
\begin{equation}
\lim_{l \rightarrow \infty} G_{l,m}(t) = \frac{H(t)}{(t-t_m)H'(t_m)}, \quad t \in \mathbb{R}.\nonumber
\end{equation}
By the foregoing lemma, $G_{l,m} \rightarrow G_m$. Observing that convergence in $PW_{[-\pi,\pi]}$ implies pointwise convergence yields the desired result.\\

\noindent Levinson proved a version of Theorem \ref{levinsonlike} in the case where $(t_n)_{n \in \mathbb{Z}}$ is a Kadec sequence.  His original proof is found in \cite[pages 47-67]{L}).  We recall that if $(f_n)_n$ is a Riesz basis arising from a Kadec sequence, then the preframe operator $L$ satisfies $\Arrowvert I-L \Arrowvert <1$.  Levinson's theorem is then recovered from Theorem \ref{levinsonlike}.


\end{document}